\newcolumntype{L}{>{\displaystyle}l}
\newcolumntype{C}{>{\displaystyle}c}
\newcolumntype{R}{>{\displaystyle}r}
\newcommand{\R}{\ensuremath{\mathbb{R}}}
\newcommand{\Z}{\ensuremath{\mathbb{Z}}}
\newcommand{\CO}{\ensuremath{\mathcal{O}}}
\newcommand{\ov}{\overline}
\newcommand{\T}{\theta}
\newcommand{\f}{\varphi}
\newcommand{\al}{\alpha}
\newcommand{\X}{\ensuremath{\mathcal{X}}}
\def\p{\partial}
\def\e{\varepsilon}
\newtheorem {theorem} {Theorem}
\newtheorem {proposition} [theorem] {Proposition}
\newtheorem {corollary} [theorem] {Corollary}
\newtheorem {lemma} [theorem] {Lemma}
\newtheorem {remark} {Remark}
\newtheorem {claim} {Claim}
\newtheorem {mtheorem} {Theorem}
\begin{document}

\title[Bifurcation of periodic solutions in piecewise differential systems]
{Bifurcations from families of periodic solutions\\
 in piecewise differential systems}

\author[J. Llibre, D.D. Novaes and C.A.B. Rodrigues]
{Jaume Llibre, Douglas D. Novaes  and Camila A. B. Rodrigues}

\let\thefootnote\relax\footnotetext{\\ Jaume Llibre\\
Departament de Matematiques,
Universitat Aut\`{o}noma de Barcelona, 08193 Bellaterra,\\
Barcelona, Catalonia, Spain\\
Email: jllibre@mat.uab.cat}

\let\thefootnote\relax\footnotetext{\\ Douglas D. Novaes\\
Departamento de Matem\'{a}tica, Universidade Estadual de Campinas,\\
Rua S\'{e}rgio Buarque de Holanda, 651, Cidade Universit\'{a}ria Zeferino Vaz, 13083-859,\\
Campinas, S\~{a}o Paulo, Brazil,\\
Email: ddnovaes@unicamp.br}

\let\thefootnote\relax\footnotetext{\\ Camila A. B. Rodrigues\\
Departamento de Matem\'{a}tica, Universidade Federal de Santa Catarina, 88040-900,\\
Florian\'{o}polis, Santa Catarina, Brazil,\\
Email: camila.rodrigues.math@gmail.com} \maketitle

\noindent{\bf Abstract.} Consider a differential system of the form
\[
x'=F_0(t,x)+\sum_{i=1}^k \e^i F_i(t,x)+\e^{k+1} R(t,x,\e),
\]
where $F_i:\mathbb S^1 \times D \to \R^m$ and $R:\mathbb S^1 \times D
\times (-\e_0,\e_0) \to \R^m$ are piecewise $C^{k+1}$ functions and
$T$-periodic in the variable $t$. Assuming that the unperturbed
system $x'=F_0(t,x)$ has a $d$-dimensional submanifold of periodic
solutions with $d<m$, we use the Lyapunov-Schmidt reduction  and
the averaging theory to study the existence of isolated $T$-periodic solutions of the
above differential system.

\smallskip

\noindent{\bf Keywords} Lyapunov-Schmidt reduction $\cdot$ periodic
solution $\cdot$ averaging method $\cdot$ non-smooth differential
system $\cdot$ piecewise smooth differential system

\smallskip

\noindent {\bf Mathematics Subject Classification (2000)} 34C29
$\cdot$  34C25  $\cdot$  37G15 $\cdot$  34C07

%\tableofcontents

\section{Introduction and Statement of the main result}

\subsection{Introduction}

The study of invariant sets, in special isolated periodic
solutions, is very important for understanding the dynamics of a
differential system. 
In the present study, we are concerned about isolated $T$-periodic solutions of non-autonomous differential systems written in the form
\begin{equation}\label{initialsystem11}
x'=F(t,x;\e)=F_0(t,x) + \sum_{i=1}^k \e^i F_i(t,x)+\e^{k+1} R(t,x,\e),\, (t,x)\in \R \times D.
\end{equation}
Here, the prime denotes the derivative with respect to the
independent variable $t,$ all the functions are assumed to be $T$-periodic in $t,$ $D$ is an open subset of $\R^m,$ and $\e$ is a small parameter. In this regard, the
averaging theory serves as an important tool to detect periodic
solutions of \eqref{initialsystem11}. A classical introduction to the averaging theory can be found in
\cite{versanmur,ver}. 

There are many studies concerning the periodic solutions of system \eqref{initialsystem11}.  As a fundamental hypothesis, it is usually assumed that the unperturbed system $x'=F_0(t,x)$ has a submanifold of initial conditions $\mathcal{Z}\subset D$ whose orbits are $T$-periodic. 
These studies differ among them depending on the regularity of system \eqref{initialsystem11} and on the dimension of $\mathcal{Z}.$ In what follows, we shall quote some of them.

For the case $\dim(\mathcal{Z})=m,$ the classical averaging theory \cite{versanmur,ver} provides sufficient conditions for the existence o periodic solutions of \eqref{initialsystem11} assuming $F_0=0$ and some  smoothness and boundedness conditions. In \cite{BuicaLlibre}, the authors extended the former results up to $k=2$ assuming weaker conditions on the regularity of system \eqref{initialsystem11}. In \cite{GGL}, the authors dropped the condition $F_0=0$ and developed the averaging theory at any order ($k \geq 1$ being an arbitrary integer) assuming the analyticity of the system \eqref{initialsystem11}.  The analyticity condition was relaxed in \cite{LliNovTeiN2014} by means of topological methods. The averaging theory was also extended to non-smooth differential
systems \cite{LliItiNov2015,LliMerNovJDE2015,LliNovPreprint2014,LliNovRod,LliNovTeiBSM2015}. The study of non-smooth differential systems is important in many fields of applied sciences since many problems of
physics, engineering, economics, and biology are modeled using
differential equations with discontinuous right-hand side, see for
instance \cite{BBCK,Co,physDspecial}. Thus, there is natural interest in studying the periodic solutions of system \eqref{initialsystem11}. when it is not smooth.

For the case $\dim(\mathcal{Z})<m$, the averaging theory by itself is not enough
to analyze  the periodic solutions of system \eqref{initialsystem11} and other
techniques need to be employed with it, such as the
\textit{Lyapunov-Schmidt reduction method}. In the case that $F_i$'s
are smooth functions, we may quote the studies \cite{BuicaLlibreFran, LliNovCand, GineLlibreZhang, jaumedouglas}. If the functions $F_i$ are not smooth or even continuous, we have studies
\cite{LliNovPreprint2014,LliNovZel}, where the authors analyzed
some classes of these systems.

In the sequel, we describe how the averaging theory and 
Lyapunov-Schmidt reduction are used for computing isolated periodic solutions of  piecewise smooth differential systems.

\subsection{Lyapunov-Schmidt reduction} \label{1.2} Consider the function

\begin{equation}\label{functiong1} g(z,\e)= \sum_{i=0}^{k}\e^{i} g_i(z) +
\CO(\e^{k+1}),
\end{equation}
where $g_i:D \to \R^m$ is a $C^{k+1}$ function, $k\geq 1$, for
$i=0,1,\ldots,k$, and $D$ is an open bounded subset of $\R^m$. For
$d<m$, let $V$ be an open bounded subset of $\R^d$ and
$\beta:\overline{V} \to \R^{m-d}$ a $C^{k+1}$ function such that
\begin{equation} \label{zdefi} \mathcal{Z} =
\{z_{\alpha}=(\alpha,\beta(\alpha)) : \alpha \in \overline{V}\}
\subset D.
\end{equation} 
We assume that
\begin{itemize}
\item[$(H_{a}$)] {\it function $g_0$ vanishes on the $d$-dimensional
submanifold $\mathcal{Z}$ of $D$.}
\end{itemize}

In \cite{LliNovCand}, the authors used the Lyapunov-Schmidt reduction
method to develop the \textit{bifurcation function of order $i$},
for $i=0,1,\ldots,k$, which for $|\e| \neq 0,$ sufficiently small,
control the existence of branches of zeros $z(\e)$ of system
\eqref{functiong1} that bifurcate from $z(0) \in \mathcal{Z}$. In
this subsection we present the results developed in that study and
those that we shall need later on. To do this we need to introduce some notations.

 Consider the projections onto the
first $d$ coordinates and onto the last $m-d$ coordinates denoted by
$\pi:\R^d \times \R^{m-d} \to \R^d$ and $\pi^{\perp}:\R^d \times
\R^{m-d} \to \R^{m-d}$, respectively. In addition, for a point $z \in
\mathcal{Z}$ we write $z=(a,b) \in \R^d \times \R^{m-d}$.

Let $L$ be a positive integer, let $x=(x_1,x_2,\ldots,x_m) \in D$,
$t\in\R$ and $y_j= (y_{j1},\ldots, y_{jm})\in \R^m$ for
$j=1,\ldots,L$. Given $G:\R\times D\rightarrow\R^m$ is a sufficiently
smooth function, for each $(t,x)\in\R\times D$ we denote by
$\p^LG(t,x)$ a symmetric $L$--multilinear map which is applied to a
``product'' of $L$ vectors of $\R^m$, which we denote as
$\bigodot_{j=1}^Ly_j\in \R^{mL}$. The definition of this
$L$--multilinear map is
\begin{equation*}\label{p}
\p^L G(t,x)\bigodot_{j=1}^Ly_j= \sum_{i_1,\ldots,i_L=1}^n \dfrac{\p^L
    G(t,x)}{\p x_{i_1}\ldots \p x_{i_L}}y_{1i_1}\ldots y_{Li_L}.
\end{equation*}
We define $\p^0$ as the identity.

The bifurcation  functions $f_i: \overline{V} \to \R^d$ of order $i$
are defined for $i=0,1,\ldots,k$ as
\begin{equation}
\label{bifurcationfunction} f_i(\al)= \pi g_i(z_{\al}) +
\sum_{l=1}^{i} \sum_{S_l} \dfrac{1}{c_1!\,c_2!2!^{c_2}\ldots
c_l!l!^{c_l}}\p_b^L \pi g_{i-l}(z_{\al})
\bigodot_{j=1}^l\gamma_j(\alpha)^{c_j},
\end{equation}
where the $\gamma_i : V \to \R^{m-d}$, for $i=1,2,\ldots, k$, are
defined recursively as
\begin{equation}
\label{gammadefi}
\begin{array}{RL}
\gamma_1(\alpha)=& -\Delta_{\al}^{-1} \pi^{\perp} g_1(z_{\al}) \quad \text{and}\\
\gamma_i(\alpha)=&-i! \Delta_{\al}^{-1} \bigg(\sum_{S'_i}
\dfrac{1}{c_1!\,c_2!2!^{c_2}\ldots
c_{i-1}!(i-1)!^{c_{i-1}}}\p_b^{I'} \pi^{\perp} g_{0}(z_{\al})
\bigodot_{j=1}^{i-1}\gamma_j(\alpha)^{c_j} \\
& +\sum_{l=1}^{i-1} \sum_{S_l} \dfrac{1}{c_1!\,c_2!2!^{c_2}\ldots
c_{l}!l!^{c_{l}}}\p_b^{L} \pi^{\perp} g_{i-l}(z_{\al})
\bigodot_{j=1}^{l}\gamma_j(\alpha)^{c_j}\bigg).
\end{array}
\end{equation}
We denote by $S_l$ the set of all $l$-tuples of non-negative integers
$(c_1,c_2, \ldots,c_l)$ such that $c_1+2c_2+\ldots+lc_l=l$,
$L=c_1+c_2+\ldots+c_l$, and by $S'_i$ the set of all $(i-1)$-tuples
of non-negative integers $(c_1,c_2, \ldots,c_{i-1})$ such that
$c_1+2c_2+\ldots+(i-1)c_{i-1}=i$, $I'=c_1+c_2+\ldots+c_{i-1}$ and
$\Delta_{\al}=\dfrac{\p \pi^{\perp} g_0}{\p b}(z_{\al})$.

Concerning the zeros of function \eqref{functiong1}, the following result was  proven in \cite{LliNovCand}:

\begin{theorem}[\cite{LliNovCand}, Corollary 1]
\label{LStheorem} Let $\Delta_{\al}$ denote the lower right corner
$(m-d)\times (m-d)$ matrix of the Jacobian matrix $D g_0(z_{\al})$.
In addition to hypothesis $(H_{a}),$ we assume that
\begin{itemize}
\item[(i)] for each $\al \in \overline{V}$, $\det \Delta_{\al} \neq 0$; and
\item[(ii)] $f_1=f_2=\ldots=f_{k-1}=0$ and $f_k$ is not identically zero.
\end{itemize}
If there exists $\al^{\ast} \in V$ such that $f_k(\al^{\ast})=0$ and
$\det(Df_k(\al^{\ast})) \neq 0$, then there exists a branch of zeros
$z(\e)$ with $g(z(\e),\e)=0$ and
$|z(\e)-z_{\al^{\ast}}|=\mathcal{O}(\e)$.
\end{theorem}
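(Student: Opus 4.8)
The plan is to run the Lyapunov--Schmidt reduction explicitly, splitting $g(z,\e)=0$ according to $z=(a,b)\in\R^d\times\R^{m-d}$ into the \emph{auxiliary equation} $\pi^{\perp}g(a,b,\e)=0$ and the \emph{bifurcation equation} $\pi g(a,b,\e)=0$. First I would solve the auxiliary equation for $b$. Hypothesis $(H_a)$ gives $\pi^{\perp}g_0(z_\alpha)=0$ for every $\alpha\in\overline V$, while hypothesis (i) says that $\Delta_\alpha=\partial_b\,\pi^{\perp}g_0(z_\alpha)$ is invertible. Hence the Implicit Function Theorem, applied to $\Phi(a,b,\e)=\pi^{\perp}g(a,b,\e)$ at $(\alpha,\beta(\alpha),0)$, yields for $(a,\e)$ near $(\alpha,0)$ a unique map $b(a,\e)$, of the same class $C^{k+1}$ as $g$, with $b(a,0)=\beta(a)$ and $\pi^{\perp}g(a,b(a,\e),\e)\equiv 0$.

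Next I would identify the Taylor coefficients in $\e$. Writing $b(a,\e)=\beta(a)+\sum_{i=1}^{k}\frac{1}{i!}\gamma_i(a)\,\e^i+\CO(\e^{k+1})$ and substituting into $\pi^{\perp}g(a,b(a,\e),\e)=0$, one matches powers of $\e$ order by order; the base case $\gamma_1(\alpha)=-\Delta_\alpha^{-1}\pi^{\perp}g_1(z_\alpha)$ is immediate, and an induction on $i$ should reproduce the recursion \eqref{gammadefi}. Substituting the same $b(a,\e)$ into the bifurcation equation defines the reduced map $\mathcal F(a,\e):=\pi g(a,b(a,\e),\e)=\sum_{i=0}^{k}\e^i f_i(a)+\CO(\e^{k+1})$, and the task is to show that its coefficients are exactly the $f_i$ of \eqref{bifurcationfunction}, starting from $f_0\equiv 0$ (again by $(H_a)$). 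I expect this bookkeeping to be the main obstacle: the combinatorial weights $1/(c_1!\,c_2!\,2!^{c_2}\cdots c_l!\,l!^{c_l})$ together with the multilinear maps $\partial_b^{L}\pi g_{i-l}(z_\alpha)\bigodot_{j}\gamma_j(\alpha)^{c_j}$ are precisely the terms produced by the iterated chain rule (Fa\`a di Bruno) when one differentiates the compositions $\pi^{\perp}g(a,b(a,\e),\e)$ and $\pi g(a,b(a,\e),\e)$ in $\e$, so the whole step amounts to checking that the two combinatorial expansions coincide.

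Finally I would conclude by a second use of the Implicit Function Theorem. By hypothesis (ii) one has $f_0=f_1=\cdots=f_{k-1}=0$, that is $\partial_\e^{j}\mathcal F(a,0)=0$ for $j=0,\dots,k-1$; since $\mathcal F$ is $C^{k+1}$, iterated application of Hadamard's lemma gives a factorization $\mathcal F(a,\e)=\e^{k}\,\widetilde{\mathcal F}(a,\e)$ with $\widetilde{\mathcal F}$ of class $C^{1}$ and $\widetilde{\mathcal F}(a,0)=f_k(a)$. Because $\widetilde{\mathcal F}(\alpha^{*},0)=f_k(\alpha^{*})=0$ and $\partial_a\widetilde{\mathcal F}(\alpha^{*},0)=Df_k(\alpha^{*})$ is invertible, the Implicit Function Theorem produces a unique branch $a(\e)$ with $a(0)=\alpha^{*}$ and $\widetilde{\mathcal F}(a(\e),\e)=0$, whence $\mathcal F(a(\e),\e)=0$. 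Setting $z(\e)=(a(\e),b(a(\e),\e))$ gives simultaneously $\pi^{\perp}g(z(\e),\e)=0$ and $\pi g(z(\e),\e)=0$, so $g(z(\e),\e)=0$; and from $a(\e)=\alpha^{*}+\CO(\e)$ together with $b(a,\e)=\beta(a)+\CO(\e)$ one obtains $|z(\e)-z_{\alpha^{*}}|=\CO(\e)$, which finishes the argument.
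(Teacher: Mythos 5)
The paper contains no proof of this theorem to compare against: it is imported verbatim from \cite{LliNovCand} (their Corollary 1), and the surrounding subsection only records the definitions \eqref{bifurcationfunction}--\eqref{gammadefi} needed to state it. That said, your proposal is sound and is precisely the Lyapunov--Schmidt reduction that the cited reference carries out: solving $\pi^{\perp}g=0$ for $b=b(a,\e)$ by the implicit function theorem (using $(H_a)$ and (i)), identifying the $\e$-Taylor coefficients of $b(a,\e)$ and of the reduced map $\mathcal{F}(a,\e)=\pi g(a,b(a,\e),\e)$ with \eqref{gammadefi} and \eqref{bifurcationfunction} via Fa\'{a} di Bruno, factoring $\mathcal{F}=\e^{k}\widetilde{\mathcal{F}}$ using (ii), $f_0\equiv 0$, and the integral form of the Taylor remainder, and concluding with a second implicit function theorem application at $(\al^{\ast},0)$; the bookkeeping you defer is mechanical and comes out exactly as you predict, since the weights in \eqref{bifurcationfunction} are the Fa\'{a} di Bruno weights and the $\gamma_j$ are the derivatives $\p_\e^j b(a,0)$. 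One thing your computation will surface: matching the coefficient of $\e^{i}$ in $\pi^{\perp}g(a,b(a,\e),\e)=0$ produces the standalone term $\pi^{\perp}g_i(z_{\al})$ inside the parentheses of \eqref{gammadefi}, which is absent as printed (already $i=2$ with $g_0$ linear in $b$ and $g_1=0$ shows it cannot be omitted, and its omission would be inconsistent with the displayed $i=1$ formula); this is a transcription typo in the paper's quotation of the recursion, not a flaw in your argument.
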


\subsection{\textbf{The averaged functions}}

Let $k \geq 1$ be a positive integer. In \cite{LliNovCand}, the authors have used Theorem \ref{LStheorem} to provide sufficient conditions for the existence of isolated periodic solutions of the following $T$-periodic non-autonomous differential system
\begin{equation}
\label{continoussystem} x'=F(t,x,\e)=F_0(t,x)+\sum_{i=1}^k \e^i
F_i(t,x) + \CO(\e^{k+1}), \quad (t,z) \in \R \times D,
\end{equation}
Here, $F_i$'s are $C^{k+1}$ functions $T$-periodic in the variable $t$.  We assumed that 
\begin{itemize}
\item[$(H_{b}$)] {\it all the solutions of the unperturbed system
$
x'=F_0(t,x)
$
starting at points of $\mathcal{Z},$ defined in \eqref{zdefi},   are $T$-periodic.}
\end{itemize}

Consider the variational equation
\begin{equation}\label{cvareq}
y'= \dfrac{\p F_0}{\p x}(t,x(t,z,0))y,
\end{equation}
where $x(t,z,0)$ denotes the solution of system
\eqref{continoussystem} when $\e=0$. Denote a fundamental
matrix of system \eqref{cvareq} by $Y(t,z)$. The \textit{average
function of order $i$} of system \eqref{continoussystem} is defined
as
\begin{equation}\label{averagefunctions}
g_i(z)= Y^{-1}(T,z)\frac{y_i(T,z)}{i!},
\end{equation} where
\begin{equation}\label{y}
\begin{array}{RL}
y_1(t,z)=&Y(t,z)\int_0^{t} Y(s,z)^{-1} F_1\left(s,x(s,z,0)\right)ds,\vspace{0.3cm}\\
y_i(t,z)=&i!Y(t,z)\int_0^{t}Y(s,z)^{-1}\Big(F_i\left(s,x(s,z,0)\right)\\
&+\sum_{S'_i} \dfrac{1}{b_1!\,b_2!2!^{b_2}\ldots
b_{i-1}!(i-1)!^{b_{i-1}}}\p^{I'} F_0(s,x(s,z,0)) \bigodot_{j=1}^{i-1}y_j(s,z)^{b_j} \\
& +\sum_{l=1}^{i-1} \sum_{S_l} \dfrac{1}{b_1!\,b_2!2!^{b_2}\ldots
b_{l}!l!^{b_{l}}}\p^{L} F_{i-l}(s,x(s,z,0))
\bigodot_{j=1}^{l}y_j(s,z)^{b_j}\bigg)ds.
\end{array}
\end{equation}

Now, in the bifurcation functions formulae \eqref{bifurcationfunction}, let the functions $g_i$'s be given by \eqref{averagefunctions} instead of \eqref{functiong1}. Concerning the isolated periodic solutions of system \eqref{continoussystem}, the following result was  proven in \cite{LliNovCand}: 

\begin{theorem}[\cite{LliNovCand}, Corollary 2]
\label{ATtheorem} Let $\Delta_{\al}$ denote the lower right corner
$(m-d)\times (m-d)$ matrix of the Jacobian matrix $Y(0,z_{\al})^{-1}-Y(T,z_{\al})^{-1}$.
In addition to hypothesis $(H_{b}),$ we assume that
\begin{itemize}
\item[(i)] for each $\al \in \overline{V}$, $\det \Delta_{\al} \neq 0$; and
\item[(ii)] $f_1=f_2=\ldots=f_{k-1}=0$ and $f_k$ is not identically zero.
\end{itemize}
If there exists $\al^{\ast} \in V$ such that $f_k(\al^{\ast})=0$ and
$\det(Df_k(\al^{\ast})) \neq 0$, then there exists a $T$-periodic solution $\varphi(t,\e)$ of \eqref{continoussystem} such that $|\varphi(0,\e)-z_{\al^{\ast}}|=\mathcal{O}(\e)$.
\end{theorem}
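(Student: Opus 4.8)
The plan is to reduce the search for $T$-periodic solutions of \eqref{continoussystem} to an application of Theorem~\ref{LStheorem}, by realizing the averaged functions \eqref{averagefunctions} as the Taylor coefficients of a suitably normalized displacement map. Let $x(t,z,\e)$ denote the solution of \eqref{continoussystem} with $x(0,z,\e)=z$; since the $F_i$ are $C^{k+1}$ and $T$-periodic and the remainder is $\CO(\e^{k+1})$, the map $(z,\e)\mapsto x(t,z,\e)$ is $C^{k+1}$, and $x(t,z,\e)$ is $T$-periodic in $t$ precisely when $x(T,z,\e)=x(0,z,\e)=z$. First I would therefore introduce the displacement function $\mathcal{D}(z,\e)=x(T,z,\e)-z$, whose zeros near $\mathcal{Z}$ are exactly the initial conditions of the $T$-periodic solutions we seek, and then set
\[
g(z,\e)=Y(T,z)^{-1}\,\mathcal{D}(z,\e)=Y(T,z)^{-1}\big(x(T,z,\e)-z\big).
\]
Because $Y(T,z)^{-1}$ is invertible, $g(\cdot,\e)$ and $\mathcal{D}(\cdot,\e)$ share the same zeros, so it suffices to study $g$.

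Next I would check that $g$ fits the hypotheses of Theorem~\ref{LStheorem}. At zeroth order $g_0(z)=Y(T,z)^{-1}(x(T,z,0)-z)$; hypothesis $(H_b)$ says $x(T,z_\al,0)=z_\al$ for $z_\al\in\mathcal{Z}$, hence $g_0$ vanishes on $\mathcal{Z}$ and hypothesis $(H_a)$ holds. Differentiating the flow in its initial condition, $D_zx(T,z,0)=Y(T,z)Y(0,z)^{-1}$, so on $\mathcal{Z}$, where $\mathcal{D}(z_\al,0)=0$, the product rule gives
\[
Dg_0(z_\al)=Y(T,z_\al)^{-1}\big(Y(T,z_\al)Y(0,z_\al)^{-1}-I\big)=Y(0,z_\al)^{-1}-Y(T,z_\al)^{-1}.
\]
Thus the lower right $(m-d)\times(m-d)$ block of $Dg_0(z_\al)$ is exactly the matrix $\Delta_\al$ of Theorem~\ref{ATtheorem}, and hypothesis (i) here is precisely hypothesis (i) of Theorem~\ref{LStheorem}.

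For the higher-order coefficients, since $Y(T,z)^{-1}$ is independent of $\e$, the coefficient of $\e^i$ in $g$ equals $Y(T,z)^{-1}$ times the coefficient of $\e^i$ in $x(T,z,\e)$ for $i\ge1$. Writing $y_i(t,z)=\p_\e^i x(t,z,\e)\big|_{\e=0}$, I would recover \eqref{y} by repeatedly differentiating the integral equation $x(t,z,\e)=z+\int_0^t F(s,x(s,z,\e),\e)\,ds$ in $\e$ at $\e=0$: the first derivative gives the linear variational problem, solved by variation of parameters with fundamental matrix $Y$ to produce $y_1$ as in \eqref{y}; the $i$-th derivative requires the Fa\`a di Bruno formula for the $\e$-derivatives of the composition $F(s,x(s,z,\e),\e)$, which generates exactly the sums over the index sets $S_l$ and $S_i'$ together with the multinomial weights appearing in \eqref{y}. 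This combinatorial bookkeeping --- verifying that the Bell-polynomial coefficients match \eqref{y} and \eqref{bifurcationfunction} term by term --- is the main technical obstacle; everything else is a direct translation. Carrying it out shows $g_i(z)=Y(T,z)^{-1}y_i(T,z)/i!$, so that $g$ has exactly the form \eqref{functiong1} with coefficients equal to the averaged functions \eqref{averagefunctions}.

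Finally, with $g$ in the form \eqref{functiong1} and its coefficients identified with the averaged functions, hypotheses $(H_a)$, (i) and (ii) of Theorem~\ref{LStheorem} all hold, together with $f_k(\al^\ast)=0$ and $\det(Df_k(\al^\ast))\neq0$. Theorem~\ref{LStheorem} then yields a branch of zeros $z(\e)$ of $g$ with $|z(\e)-z_{\al^\ast}|=\CO(\e)$. As these are zeros of $\mathcal{D}$, the function $\f(t,\e)=x(t,z(\e),\e)$ is a $T$-periodic solution of \eqref{continoussystem} with $\f(0,\e)=z(\e)$, whence $|\f(0,\e)-z_{\al^\ast}|=\CO(\e)$, completing the argument.
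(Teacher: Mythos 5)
Your proposal is correct and takes essentially the same route as the paper: normalize the displacement map $x(T,z,\e)-z$ by $Y(T,z)^{-1}$, verify $(H_a)$ and that $Dg_0(z_{\al})=Y(0,z_{\al})^{-1}-Y(T,z_{\al})^{-1}$ on $\mathcal{Z}$ (using that the product-rule term vanishes there), identify the $\e$-Taylor coefficients of $g$ with the averaged functions \eqref{averagefunctions} via the variational equation and Fa\'a di Bruno's formula, and then apply Theorem \ref{LStheorem}. This is exactly the scheme the paper itself carries out in Section \ref{proof} for the non-smooth analogue (Theorem \ref{maintheorem}), so no further comparison is needed.
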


\begin{remark}\label{remark}
The functions $y_i(t,z)$ are obtained recurrently as solutions of the following 
integral equations 
\begin{equation}
\label{y1}
\begin{array}{RL}
y_1(t,z)=&\int_0^{t} \Big( F_1\left(s,x(s,z,0)\right)+\p F_0(s,x(s,z,0))
y_1(s,z) \Big)ds,\vspace{0.3cm}\\
y_i(t,z)=&i!\int_0^{t}\Big(F_i\left(s,x(s,z,0)\right)
+\sum_{l=1}^{i}\sum_{S_l} \dfrac{1}{b_1!\,b_2!2!^{b_2}\ldots
b_l!l!^{b_l}}\vspace{0.2cm}\\
&\cdot\p^L F_{i-l} \left(s,x(s,z,0)\right)
\bigodot_{j=1}^ly_j(s,z)^{b_j}\Big)ds, \text{ for }\, i=2,\ldots,k.
\end{array}
\end{equation}
See, for instance,
\cite{LliItiNov2015,LliNovTeiN2014,LliNovTeiN2014c}.
\end{remark}

For more details on the results of this  section see
\cite{LliNovCand}.

\subsection{Standard form and main result}
This section is devoted to introduce the class of non-smooth differential systems of our interest as well as our main result. 

First of all, we introduce the Filippov's  convention for a class of non-smooth differential systems. A \textit{piecewise smooth vector field} defined in an open bounded
set $U\subset\R^m$ is a function $X:U\rightarrow\R^n$ which is smooth
except on a set $\Sigma$ of zero measure, called the {\it
discontinuity set}. We assume that
$U\setminus\Sigma$ is a finite union of disjoint open sets $U_i,$
$i=1,2,\ldots,n,$ where the restriction $X_i=X\big |_{U_i}$ can be
extended continuously to $\ov{U_i}$. The local orbit of $X$ at a point
$p\in U_i$  is defined as usual for a differential system. The local orbit of $X$ at a point $p\in\Sigma$ follows the Filippov's convention \cite{Filippov}. In what follows, we shall state this convention for the so-called crossing points.

Assume that $\Sigma$ is locally a codimension one embedded submanifold of $\R^n$ around $p$. In this case, there exists a small neighborhood $U_p$ and a smooth function $h:U_p\rightarrow \R,$ for which $0$ is a regular value, such that $\Sigma\cap U_p=h^{-1}(0)$. Denote $U_p^+=\{z\in U_p:\, h(z)\geq 0\}$ and $U_p^-=\{z\in U_p:\, h(z)\leq 0\},$ and write $X^{\pm}=X\big|_{U_p^{\pm}}.$ We say that $p$ is a {\it crossing point} provided that $X^+ h(p) X^-h(p)>0,$ where $X^{\pm}h(p)=\langle\nabla h(p),X^{\pm}(p)\rangle.$ Denote by $\Sigma^c\subset \Sigma$ the set of crossing points, which is open in $\Sigma.$
 For a point $p\in\Sigma^c$, the local orbit
of $X$ at $p$ is given as the concatenation of the local trajectories
of $X^{\pm}$ at $p$. In this case, we say that the orbit {\it crosses}
the discontinuity set. In this paper, we shall only deal with crossing points. See \cite{Filippov,GST} for the Filippov's convention of local trajectories at points in $\Sigma \setminus \Sigma^c$.

The above convention was stated for vector fields, that is, autonomous differential systems. Nevertheless, it can also be stated for non-autonomous differential systems $x'=F(t,x;\e)$ just by considering the extended autonomous differential system $(\dot t, \dot{x})=\widetilde F(t,x;\e)=(1,F(t,x;\e))$, $(t,x)\in\R \times \R^n.$ 

In what follows, we define the class of nonsmooth differential systems of our interest. Let $n>1$ be a positive integer, $0<t_1<t_2<\ldots<t_{n-1}<T$ an ordered sequence of real numbers, and $D$ an open subset of $\R^m$. For $i=0,1,\ldots,k$ and $j=1,2,
\ldots,n,$ let $F_i^j: \R \times D \to \R^m$ and $R^j:
\R \times D \times (-\e_0, \e_0) \to \R^m$ be 
$C^{k+1}$ functions $T$-periodic in the variable $t$.  Define the following piecewise smooth functions
\begin{equation*}\label{funcF}
\begin{array}{l}
\displaystyle F_i(t,x)=\sum_{j=1}^{n}\chi_{[t_{j-1},t_j]}(t)
F_i^j(t,x) ,\,\, i=0,1,...,k,\quad \text{and}\vspace{0.1cm}\\
\displaystyle
R(t,x,\e)=\sum_{j=1}^{n}\chi_{[t_{j-1},t_j]}(t)R^j(t,x,\e),
\end{array}
\end{equation*}
where $\chi_A(t)$ is the characteristic function of $A$ defined as
\begin{equation*} \chi_A(t)=\begin{cases}
1 & \text{if $t \in A$,}\\
0 & \text{if $t \not\in A$.}\\
\end{cases}
\end{equation*}
Consider the $T$-periodic non-autonomous differential system
\begin{equation} \label{perturbedsystem} x'=F(t,x,\e)=\sum_{i=0}^k
\e^i F_i(t,x) + \e^{k+1}R(t,x,\e).
\end{equation}
The discontinuity set $\Sigma$ 
is given by
\[
\Sigma=(\{t=0 \equiv T\} \cup \{t=t_1\} \cup \ldots \cup
\{t=t_{n-1}\}) \cap \mathbb \R \times D,
\]
Following the convention above, one can easily see that $\Sigma=\Sigma^c.$ Indeed, for each $j=1,2,\ldots,n,$ and $t \in [t_{j-1},t_{j}],$ system \eqref{perturbedsystem} writes
\begin{equation}
\label{sector} x'=F_j(x,t;\e)=\sum_{i=0}^k\e^i F_i^j(t,x) + \e^{k+1}R^j(t,x,\e).
\end{equation}
For $(t_j,x)\in\Sigma$ we have that the connected component  of $\Sigma$ containing $(t_j,x)$ is given by $\Sigma_i=h_j^{-1}(0),$ where $h_j(t,x)=t-t_j$.  Considering the extended system $\widetilde F_j(t,x;\e)=(1,F_j(t,x;\e)),$ we see that $\widetilde F_{j} h_j(t_j,x;\e)\widetilde F_{j+1} h_j(t_j,x;\e)=1>0.$

For each $z \in D$ and $|\e|\neq0$ sufficiently
small, we denote by $x(\cdot,z,\e):[0,t_{(z,\e)}) \to \R^m$ the
solution of system \eqref{perturbedsystem} such that  $x(0,z,\e)=z$,
where $[0,t_{(z,\e)})$ is the interval of definition of
$t\mapsto x(t,z,\e)$. We shall assume  hypothesis $(H_b)$ for system \eqref{perturbedsystem}, that is, the solutions of the unperturbed system 
\begin{equation}
\label{unperturbedsystem} x'= F_0(t,x),
\end{equation}
starting at points of $\mathcal Z,$ defined in \eqref{zdefi}, are $T$-periodic.

%Consider the submanifold $\mathcal{Z} =
%\{z_{\alpha}=(\alpha,\beta_0(\alpha)) : \alpha \in \overline{V}\}$, where
%$V$ is an open bounded subset of $\R^m$, and $\beta_0:\overline{V}
%\to \R^{d-m}$ is a $C^k$ function with $k \geq 1$. Note that for each
%$z_{\al} \in \mathcal{Z}$ and $i \in \{0,1,\ldots, k\}$, $(t_i,x(t_i,z_{\al},0)) \in \Sigma^c$. 

%Indeed, for each $j=1,2,\ldots,n,$ the set
%of discontinuity can be locally described by $h_j^{-1}(0),$ where
%$h_j:\mathbb S^1 \times D \to \R$ is $h_j(t,x)=t-t_j$. To show that $(t_i,x(t_i,z_{\al},0)) \in \Sigma^c$ it is enough to prove
%that $\langle \nabla h_j(t,x), (1, F^j(t,x)) \rangle \langle \nabla
%h_j(t,x) , (1, F^{j+1}(t,x)) \rangle >0$ (see \cite{LliMerNovJDE2015}), where
%$\nabla h_j(t,x)$ denotes the gradient vector of the function
%$h_j(t,x)$. Here, $\nabla
%h_j(t,x)=(1,0)$ and  $\langle \nabla h_j(t,x), (1, F^j(t,x)) \rangle \langle \nabla
%h_j(t,x) , (1, F^{j+1}(t,x)) =1 >0$.

In \cite{LliNovRod}, the averaging theory was developed for system \eqref{perturbedsystem} assuming  $\dim (\mathcal{Z})=m$. Here, we are interested in the case $\dim(\mathcal{Z})<m$. Accordingly, we shall prove that the bifurcation functions \eqref{bifurcationfunction} also provides sufficient conditions for the existence of periodic solutions of \eqref{perturbedsystem} bifurcating from $\mathcal{Z}$  for $|\e| \neq 0$ sufficiently small.

In what follows, we shall state our main result which basically says that 
%the content of Theorem \ref{ATtheorem} still holds for the non-smooth differential system \eqref{perturbedsystem}, that is, 
the simple zeros of the bifurcation functions \eqref{bifurcationfunction} also control the bifurcation of isolated periodic solutions of the non-smooth system \eqref{perturbedsystem}. However, in order to obtain the averaged functions \eqref{averagefunctions} we need to compute the fundamental
matrix $Y(t,z)$ of the variational equation
\begin{equation}\label{totalvarsys}
y'=\frac{\p}{\p x}F_0 (t, x(t,z,0)) y.
\end{equation}
Notice that, for each
$j=1,2,\ldots,n$, if $x_j(t,z,\e)$ denotes the solution of
\eqref{sector} for $t_{j-1}\leq t \leq t_j$, then function $t \mapsto
(\p x_j/\p z)(t,z,0)$ is a solution of \eqref{totalvarsys} for
$t_{j-1}\leq t \leq t_j$. 
Recall that the right product of a solution of the variational equation
\eqref{totalvarsys} by constant matrix is still a solution. Therefore, a fundamental matrix 
$Y(t,z)$ of \eqref{totalvarsys} can be built as follows:
\begin{equation*} \label{relation}
Y(t,z)=\begin{cases}
Y_1(t,z) & \text{if $0=t_0\leq t \leq t_1$},\\
Y_2(t,z) & \text{if $t_1\leq t \leq t_2$},\\
\vdots \\
Y_n(t,z) & \text{if $t_{n-1}\leq t \leq t_n=T$},
\end{cases}
\end{equation*} with
\begin{equation}
\label{defined1}
\begin{array}{RL}
Y_1(t,z)=&\frac{\p x_1}{\p z}(t,z,0), \quad  \text{and}\\
Y_j(t,z)=&\frac{\p x_j}{\p z}(t,z,0) \left(\frac{\p x_j}{\p
z}(t_{j-1},z,0)\right)^{-1} Y_{j-1}(t_{j-1},z), \quad \text{for
$j=2,3,\ldots,n$}.
\end{array}
\end{equation}
Notice that $Y(0,z)=Id.$

Now, we are ready to state our main result.

\begin{mtheorem}\label{maintheorem}
Let $\Delta_{\al}$ denote the lower right corner $(m-d)\times (m-d)$
matrix of the matrix $Id-Y^{-1}(T,z_{\al})$. In addition to hypothesis $(H_{b}),$ we assume that 
\begin{itemize}
\item[(i)] for each $\al \in \overline{V}$, $\det \Delta_{\al} \neq 0,$
\item[(ii)] $f_1=f_2=\ldots=f_{k-1}=0$ and $f_k$ is not identically zero.
\end{itemize}
If there exists
$\al^{\ast} \in V$ such that $f_k(\al^{\ast})=0$ and 
$\det(Df_k(\al^{\ast}))\neq 0$, then there exists a $T$-periodic
solution $\f(t,\e)$ of \eqref{perturbedsystem} such that
$|\f(0,\e)-z_{\al^{\ast}}|=\CO(\e)$.
\end{mtheorem}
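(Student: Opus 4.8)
The plan is to turn the detection of $T$-periodic solutions of \eqref{perturbedsystem} into a zero-finding problem to which the abstract Lyapunov--Schmidt statement Theorem~\ref{LStheorem} applies, following the scheme of Theorem~\ref{ATtheorem}. Since every function in \eqref{perturbedsystem} is $T$-periodic in $t$ and $\Sigma=\Sigma^c$, the solution $x(\cdot,z,\e)$ is $T$-periodic precisely when $x(T,z,\e)=z$. I would therefore study the map
\begin{equation*}
g(z,\e)=Y^{-1}(T,z)\big(x(T,z,\e)-z\big),
\end{equation*}
where $Y(t,z)$ is the fundamental matrix of \eqref{totalvarsys} assembled in \eqref{defined1}. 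As $Y(T,z)$ is invertible, $g(\cdot,\e)$ and the displacement $x(T,\cdot,\e)-z$ have the same zeros, so it is enough to produce a branch of zeros of $g$. By $(H_b)$ we have $x(T,z_{\al},0)=z_{\al}$, whence the $\e^0$-coefficient $g_0(z)=Y^{-1}(T,z)(x(T,z,0)-z)$ vanishes on $\mathcal{Z}$; this is hypothesis $(H_a)$. Differentiating $g_0$ at $z_{\al}\in\mathcal{Z}$, the term carrying $D Y^{-1}(T,z_{\al})$ drops because $x(T,z_{\al},0)-z_{\al}=0$, leaving $Dg_0(z_{\al})=Y^{-1}(T,z_{\al})\big(\p x/\p z(T,z_{\al},0)-Id\big)=Id-Y^{-1}(T,z_{\al})$, since $\p x/\p z(T,z,0)=Y(T,z)$. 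Its lower right $(m-d)\times(m-d)$ block is the matrix $\Delta_{\al}$ of the statement, so hypothesis (i) is exactly the invertibility of $\partial\pi^{\perp}g_0/\partial b(z_{\al})$.

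The one point that is genuinely new with respect to the smooth Theorem~\ref{ATtheorem}, and the place where I expect the real work to lie, is to show that $z\mapsto x(T,z,\e)$ is of class $C^{k+1}$ in $(z,\e)$ and satisfies $x(T,z,\e)=x(T,z,0)+\sum_{i=1}^{k}\tfrac{\e^i}{i!}y_i(T,z)+\CO(\e^{k+1})$ with the $y_i$ of \eqref{y}, even though $F$ is discontinuous in $t$. I would establish this by concatenation. On each strip $[t_{j-1},t_j]$ the system agrees with the smooth system \eqref{sector}, whose flow $\phi_j$ is $C^{k+1}$ in the initial state and in $\e$ by the classical smooth-dependence theorems. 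Because $\dot t\equiv1>0$ the whole of $\Sigma$ consists of crossing points (as already noted, $\widetilde F_j h_j\,\widetilde F_{j+1}h_j=1$), so no tangency occurs and the global solution is the finite concatenation $x(t,z,\e)=\phi_j\big(t;t_{j-1},x(t_{j-1},z,\e),\e\big)$ on $[t_{j-1},t_j]$, with the state matched continuously at each $t_j$. Being a finite composition of $C^{k+1}$ maps, the time-$T$ map is $C^{k+1}$ in $(z,\e)$; the same matching shows that $\p x/\p z(t,z,0)$ is the continuous fundamental matrix of \eqref{totalvarsys} normalized by $Y(0,z)=Id$, i.e.\ the $Y(t,z)$ of \eqref{defined1}. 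The expansion and the recursion \eqref{y1} then follow verbatim from the smooth computation, by inserting the ansatz into the integral equation $x(t,z,\e)=z+\int_0^t F(s,x(s,z,\e),\e)\,ds$ and collecting powers of $\e$; the jumps of $F$ in $t$ are immaterial because they only enter the integrand as a function of $s$ on a set of measure zero.

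With these ingredients the argument closes as in \cite{LliNovCand}. Since $Y^{-1}(T,z)$ is $\e$-independent, the $\e^i$-coefficient of $g$ equals $Y^{-1}(T,z)y_i(T,z)/i!$ for $i\ge1$, which is precisely the averaged function $g_i$ of \eqref{averagefunctions}; hence the bifurcation functions \eqref{bifurcationfunction} associated with $g$ coincide with those in the statement, and they inherit the $C^{k+1}$ regularity from the $C^{k+1}$ data exactly as in the smooth case. Applying Theorem~\ref{LStheorem} to $g$ under $(H_a)$, (i) and (ii), the simple zero $\al^{\ast}$ of $f_k$ yields a branch of zeros $z(\e)$ of $g$ with $|z(\e)-z_{\al^{\ast}}|=\CO(\e)$. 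Finally $\f(t,\e)=x(t,z(\e),\e)$ is the desired $T$-periodic solution of \eqref{perturbedsystem}, and $\f(0,\e)=z(\e)$ gives $|\f(0,\e)-z_{\al^{\ast}}|=\CO(\e)$.
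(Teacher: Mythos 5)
Your proposal is correct and follows essentially the same route as the paper: both reduce the problem to zeros of $g(z,\e)=Y^{-1}(T,z)\big(x(T,z,\e)-z\big)$, verify that $(H_b)$ gives $(H_a)$ and that $Dg_0(z_{\al})=Id-Y^{-1}(T,z_{\al})$ on $\mathcal{Z}$, expand the period map by concatenating the smooth flows on the strips $[t_{j-1},t_j]$, identify the $\e$-coefficients with the averaged functions, and conclude via Theorem \ref{LStheorem}. The only difference is presentational: the paper organizes the expansion piecewise through the functions $w_i^j$ (Lemma \ref{lemma}, Proposition \ref{equality}, Claim \ref{lastclaim}), whereas you argue directly on the global integral equation; the two computations coincide.
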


\begin{remark}
	
	The derivatives $\partial^j F_{i}(s,z)$, which appears in \eqref{bifurcationfunction}, are computed as follows:
	\[
	\frac{\partial^j F_{i}}{\p z}(s,z)
	=\sum_{j=1}^{n}\chi_{[t_{j-1},t_j]}(s)\frac{\partial^j F_{i}^j}{\p
		z^j}(s,z).
	\]
\end{remark}

This paper is organized as follows. In Section \ref{andronov}, we present a wide class of non-smooth differential systems for which Theorem \ref{maintheorem} can be applied.  In Section
\ref{theaveragefunctions}, we present the explicit formulae
\eqref{averagefunctions} for the average functions of the non-smooth
differential system \eqref{perturbedsystem}. Finally, in Section \ref{proof}
we prove Theorem \ref{maintheorem} and in Section \ref{examples} we
provide two applications of Theorem \ref{maintheorem}.

\section{Andronov-Hopf equilibria}
\label{andronov}
In this section, we provide a class of non-smooth vector fields that can be studied in light of the previous result, namely, non-smooth perturbations of Andronov-Hopf equilibria. 

An Andronov-Hopf equilibrium is characterized by a singularity admitting one pair of purely imaginary conjugate eigenvalues such that any other eigenvalue is real and nonvanishing.  The Andronov-Hopf bifurcation appears in the study of many differential models describing real phenomena, some of them are non-smooth (see, for instance, \cite{Akhmet,CG,LliNovZel,SM2,SM,EZR}  and the references quoted therein). 

In what follows, we describe this class of non-smooth differential system. Let $D =D_1\times D_2$ be a subset of $\R^{2+m}$, where $D_1 \subset \R^2$ and $D_2 \subset \R^m$. Take $n>1$ a positive integer, $\al_0=0$, $\al_n=2\pi$ and $\al=(\al_1,\ldots,\al_{n-1})\in\mathbb{T}^{n-1}$ a $(n-1)$-tuple of angles such that $0=\al_0<\al_1<\al_2<\cdots<\al_{n-1}< \al_n=2\pi$. Consider $\X(u,v,w;\e)=(X_1,X_2,\ldots,X_n)$ a $n$-tuple of smooth vector fields defined on an open bounded neighborhood $D \subset \R^{2+m}$ of the origin and depending on a small parameter $\e$.  For each $j=1,2, \ldots, n,$ let $C_j$ be a subset of $D$ such that the intersection of $C_j$ with the plane $w=0$ is a sector in the plane $uv$ that is delimited by rays starting at the origin and passing through point $(\cos \al_j, \sin \al_j)$.

Now let $Z_{\X,\al}:D\rightarrow\R^{2+m}$ be a piecewise smooth vector field defined as $Z_{\X,\al}(u,v,w;\e)$ $=X_j(u,v,w;\e)$ when $(u,v,w)\in C_j$, and consider the following discontinuous piecewise smooth differential system
\begin{equation}\label{piece}
(\dot u,\dot v, \dot w)^T=Z_{\X,\al}(u,v,w;\e).
\end{equation}
The above notation means that at each sector $C_j$ we are considering a smooth differential system of the form
\begin{equation}\label{piecesector}
(\dot u,\dot v, \dot w)^T=X_j(u,v,w;\e)=X_0(u,v,w)+ \displaystyle \sum_{i=1}^k \e^i X_{i}^j(u,v,w)+\e^{k+1}H^j(u,v,w,\e),
\end{equation}where $k>1$ is an integer.

As our main hypothesis regarding system \eqref{piece}, we assume that the origin is an equilibrium of  $Z_{\X,\al}(u,v,w;\e)$ and
 \[
 Z_{\X,\al}(u,v,w;0)=X_0(u,v,w)=\Big(-v+P(u,v,z)\,,\,u+Q(u,v,z)\,,\,\lambda w+R(u,v,w)\Big),
 \]
 where $P,Q,R$ vanish at the origin and $\lambda=(\tilde\lambda_1, \ldots, \tilde \lambda_m)^T$ is such that $\tilde\lambda_s$ is negative, for $s=1,2, \ldots, m.$
Notice that we are assuming that the \textit{unperturbed system} $Z_{\X,\al}(u,v,w;0)$  is smooth and admits an Andronov-Hopf equilibrium at the origin.

Now, changing $u \to \e U, v \to \e V$ and $w \to \e W$ we get, for each $j=1, \ldots, n,$ a differential system of the form
\begin{equation}
\label{shopf2}
\begin{cases}
\dot{U}=-V+\displaystyle \sum_{i=1}^k \e^i \,\mathcal{G}_{1,i}(U,V,W)+ \e^{k+1}\mathcal{R}_{1}(U,V,W, \e),\\
\dot{V}=U+\displaystyle \sum_{i=1}^k \e^i\, \mathcal{G}_{2,i}(U,V,W)+ \e^{k+1}\mathcal{R}_{2}(U,V,W, \e),\\
\dot{W}=\lambda W+\displaystyle\sum_{i=1}^k \e^i\, \mathcal{G}_{3,i}(U,V,W)+ \e^{k+1}\mathcal{R}_{3}(U,V,W, \e),
\end{cases}
\end{equation}

In order to use Theorem \ref{maintheorem} for studying system \eqref{piece}, it has to be written in its standard form. Doing the cylindrical change of variables $U=r\,\cos\T$,  $V=r\,\sin\T$, $W=w$ and assuming that each $X_j$ has the form \eqref{shopf2}, system \eqref{piecesector} becomes
\begin{equation*}
\begin{array}{rl}
&r'(\T)=\dfrac{\dot r(t)}{\dot \T(t)}=\displaystyle\sum_{i=0}^k\e^i F_{1,i}^j(\T,r,w) + \e^{k+1}R_1^j(\T,r,w,\e),\\
&w'(\T)=\dfrac{\dot w(t)}{\dot \T(t)}=\displaystyle\sum_{i=0}^k\e^i F_{2,i}^j(\T,r,w) + \e^{k+1}R_2^j(\T,r,w,\e).
\end{array}
\end{equation*}
Now for $\T \in [\al_{j-1},\al_j],$  $F_{i,1}^j, F_{i,2}^j :\mathbb{S}^1\times D\rightarrow\R^{m+1}$ and $R_1^j, R_2^j :\mathbb S^1\times D\times(-\e_0,\e_0)\rightarrow\R^{m+1}$ are $C^{k+1}$ functions, depending on the vector fields $X_i^j$, and  $2\pi$-periodic in the first variable, with $D$ being an open bounded set and $\mathbb{S}^1\equiv\R/(2\pi\Z)$. Denoting
\begin{equation*}
\begin{array}{l}
\displaystyle F_{1,i}(\T,r,w)=\sum_{j=1}^{n}\chi_{[\al_{j-1},\al_j]}(\T) F_{1,i}^j(\T,r,w) ,\,\, i=0,1,...,k,\quad \vspace{0.1cm}\\
\displaystyle
\displaystyle F_{2,i}(\T,r,w)=\sum_{j=1}^{n}\chi_{[\al_{j-1},\al_j]}(\T) F_{2,i}^j(\T,r,w) ,\,\, i=0,1,...,k,\vspace{0.1cm}\\
\displaystyle R_1(\T,r,w,\e)=\sum_{j=1}^{n}\chi_{[\al_{j-1},\al_j]}(\T)R_1^j(\T,r,w,\e),\quad \text{and}\vspace{0.1cm}\\
\displaystyle
R_2(\T,r,w,\e)=\sum_{j=1}^{n}\chi_{[\al_{j-1},\al_j]}(\T)R_2^j(\T,r,w,\e),
\end{array}
\end{equation*}
system \eqref{piece} writes
\begin{equation*}
\begin{split}
r'(\T)= \displaystyle \sum_{i=0}^k\e^i F_{1,i}(\T,r,w)+\e^{k+1} R_1(\T,r,w,\e),\\
w'(\T)=\displaystyle\sum_{i=0}^k\e^i F_{2,i}(\T,r,w)+\e^{k+1} R_2(\T,r,w,\e).
\end{split}
\end{equation*}
Notice that the unperturbed system is $(r'(\T),w'(\T))=(0,\lambda w),$ so its solution for an initial condition $(r_0,w_0^1, \ldots, w_0^m)$ is provided by $$(r_0, w_0^1  e^{\lambda_1}, \ldots,  w_0^m e^{\lambda_m}).$$
Considering the set $\mathcal{Z}=\{(r,0,\ldots,0); r>0\} \subset \R^{1+m},$ the solution of the unperturbed system is $2\pi$--periodic for every initial condition $(r_0,w_0^1, \ldots, w_0^m) \in \mathcal Z$. Moreover, fundamental matrix $Y$ is such that $Id-Y^{-1}$ satisfy that $\det(\Delta_r) \neq 0$ for points in $\mathcal Z$.
Hence, system \eqref{piece} satisfies all hypotheses of Theorem \ref{maintheorem} and, then, we can use our main result to study isolated periodic solutions for systems like \eqref{piece}.

\section{An algorithm for the bifurcation functions}\label{theaveragefunctions}

In this section, we shall provide an algorithm for computing the
bifurcation functions \eqref{bifurcationfunction} for the non-smooth case.
Their expressions are defined recurrently and uses \textit{Bell
polynomials}, instead of the set $S_l,$  which are already implemented in computer algebra systems such as Mathematica and Maple.  In this way,  the computation of the higher order bifurcation functions is significantly simplified. In \cite{N},
it was proven that the average functions defined for smooth cases can
be computed using \textit{Bell polynomials}. In \cite{LliNovRod},
the authors did the same for the non-smooth case. 

For each pair of
nonnegative integers $(p,q)$, the partial Bell polynomial is defined
as
\[
B_{p,q}(x_1,x_2,\ldots,x_{p-q+1})=\sum_{\widetilde{S}_{p,q}}\frac{p!}{b_1!
b_2! \ldots b_{p-q+1}!} \prod_{j=1}^{p-q+1}
\bigg(\frac{x_j}{j!}\bigg)^{b_j},
\]
where $\widetilde{S}_{p,q}$ is the set of all $(p-q+1)$-tuple of
nonnegative integers $(b_1,b_2, \ldots, b_{p-q+1})$ satisfying $b_1 +
2b_2 + \ldots + (p-q + 1)b_{p-q+1} = p$, and $b_1 + b_2 + \ldots +
b_{p-q+1} = q$. Moreover, if $g$ and $h$ are sufficiently smooth
functions, Fa\'{a} di Bruno's formula for the $L$-th derivative of a composite function gives
\begin{equation*}
\label{derivativeBell} \dfrac{d^l}{d x^l} g(h(x))=\sum_{m=1}^l
g^{(m)}(h(x)) B_{l,m}(h'(x),h''(x), \ldots, h^{(l-m+1)}(x)).
\end{equation*}

\subsection{Average Functions}

In this subsection, we develop a recurrence to compute the average
function \eqref{averagefunctions} in the particular case of the
non-smooth differential equation \eqref{perturbedsystem}. So,
consider the functions $w_i^j:(t_{j-1},t_{j}]\times D \to \R^m$ defined
recurrently for $i=1,2,\ldots,k$ and $j=1,2,\ldots,n,$ as
\begin{equation}\label{w11}
\begin{array}{RL}
w_1^1(t,z)=&\int_0^{t} \bigg( F_1^1(s,x(s,z,0)) + \partial
F_0^1(s,x(s,z,0))w_1^1(s,z) \bigg)ds,\vspace{0.3cm}\\
w_i^1(t,z)=& i!\int_{0}^{t}\bigg(F_i^{1}(s,x(s,z,0))+\vspace{0.2cm}\\
&\sum_{l=1}^{i}\sum_{S_l}\dfrac{1}{b_1!\,b_2!2!^{b_2}\ldots b_l!l!^{b_l}}
\cdot\partial ^L F_{i-l}^{1}(s,x(s,z,0)) \bigodot_{m=1}^l w_m^{1}(s,z)^{b_m}
\bigg)ds,\vspace{0.3cm}\\
w_i^j(t,z)=&w_i^{j-1}(t_{j-1},z)+i! \int_{t_{j-1}}^{t}\bigg(F_i^{j}(s,x(s,z,0))
+\vspace{0.2cm}\\
&\sum_{l=1}^{i}\sum_{S_l}\dfrac{1}{b_1!\,b_2!2!^{b_2}\ldots
b_l!l!^{b_l}}\cdot\partial ^L F_{i-l}^{j}(s,x(s,z,0))
\bigodot_{m=1}^l w_m^{j}(s,z)^{b_m} \bigg)ds.
\end{array}
\end{equation}
Since $F_0\neq0$ the recurrence defined in \eqref{w11} is an integral
equation and the next lemma solves it using Bell polynomials.

\begin{lemma}\label{lemmaexplicit}
For $i=1,2,\ldots,k$ and $j=1,2,\ldots,n$ the recurrence \eqref{w11}
can be written as follows
\begin{equation*}
\begin{array}{RL}
w_1^1(t,z)=&\!\!\!\!Y_1(t,z)\int_0^{t} Y_1^{-1}(s,z)F_1^1(s,x(s,z,0))ds,
\vspace{0.3cm}\\
w_1^j(t,z)=&\!\!\!\!Y_j(t,z)\bigg(Y_j^{-1}(t_{j-1},z)w_1^{j-1}(t_{j-1},z)
+\int_{t_{j-1}}^{t}Y_j^{-1}(s,z)F_1^{j}(s,x(s,z,0))ds\bigg),\vspace{0.3cm}\\
w_i^1(t,z)=&\!\!\!\!Y_1(t,z)\int_{0}^{t}Y_1^{-1}(s,z)\Big(i! F_i^1
\left(s,x(s,z,0)\right)\\
&+\sum_{m=2}^i  \p^m F_0^1(s,x(s,z,0)).B_{i,m}(w_1^1, \ldots,w_{i-m+1}^1),
\vspace{0.3cm}\\
&+\sum_{l=1}^{i-1}\sum_{m=1}^l \dfrac{i!}{l!} \p^m F_{i-l}^1(s,x(s,z,0)).
B_{l,m}(w_1^1,\ldots,w_{l-m+1}^1) \Big) ds, \vspace{0.3cm}\\
\end{array}
\end{equation*}

\begin{equation*}
\begin{array}{RL}
w_i^j(t,z)=&\!\!\!\!Y_j(t,z)\Big[Y_j^{-1}(t_{j-1},z)w_i^{j-1}(t_{j-1},z)+
\int_{t_{j-1}}^{t}Y_j^{-1}(s,z)\Big(i! F_i^j\left(s,x(s,z,0)\right)\\
&+\sum_{m=2}^i  \p^m F_0^j(s,x(s,z,0)).B_{i,m}(w_1^j, \ldots,w_{i-m+1}^j),
\vspace{0.3cm}\\
&+\sum_{l=1}^{i-1}\sum_{m=1}^l \dfrac{i!}{l!} \p^m
F_{i-l}^j(s,x(s,z,0)). B_{l,m}(w_1^j,\ldots,w_{l-m+1}^j) \Big)
ds.\Big]
\end{array}
\end{equation*}
\end{lemma}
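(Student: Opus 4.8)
The plan is to convert the integral recurrence \eqref{w11} into a sequence of linear inhomogeneous ODEs, one on each interval $[t_{j-1},t_j]$, and then to solve each of them by variation of parameters using the fundamental matrix $Y_j(t,z)$ from \eqref{defined1}. Although \eqref{w11} is genuinely an integral equation (because $F_0\neq0$), the crucial structural observation is that the unknown $w_i^j$ enters the right-hand side only \emph{linearly} and only through a single, identifiable term: for $w_i$ to occur in a product $\bigodot_{m=1}^l w_m^{b_m}$ one needs $b_i\geq1$, which together with $\sum_j j\,b_j=l\leq i$ forces $l=i$, $b_i=1$ and all other $b_j=0$, giving the contribution $\p F_0^j\,w_i^j$. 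Once that term is isolated it plays the role of the homogeneous part of a linear ODE whose forcing depends solely on the previously computed $w_m^j$ with $m<i$.

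First I would treat the base case $i=1$. Differentiating the defining integral for $w_1^1$ gives $\dot w_1^1=\p F_0^1\,w_1^1+F_1^1$ with $w_1^1(0,z)=0$; since $Y_1(t,z)=\p x_1/\p z(t,z,0)$ is by construction a fundamental matrix of the variational equation $\dot y=\p F_0^1(t,x(t,z,0))y$ with $Y_1(0,z)=Id$, variation of parameters yields the stated closed form. For $j>1$ the same computation applies on $[t_{j-1},t_j]$ with initial value $w_1^j(t_{j-1},z)=w_1^{j-1}(t_{j-1},z)$, producing the boundary term $Y_j(t,z)Y_j^{-1}(t_{j-1},z)w_1^{j-1}(t_{j-1},z)$.

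The inductive step is the heart of the argument, and its main obstacle is purely combinatorial: rewriting the $S_l$-indexed sums of \eqref{w11} in terms of the partial Bell polynomials. Grouping the tuples in $S_l$ according to the value $L=m$ of $c_1+\cdots+c_l$, one checks that the coefficient $1/(b_1!\,b_2!2!^{b_2}\cdots b_l!l!^{b_l})$ equals $1/\prod_j b_j!(j!)^{b_j}$, which is exactly $1/l!$ times the coefficient appearing in $B_{l,m}$; hence
\[
\sum_{\substack{S_l\\ L=m}}\frac{1}{b_1!\,b_2!2!^{b_2}\cdots b_l!l!^{b_l}}\bigodot_{j=1}^l w_j^{b_j}=\frac{1}{l!}B_{l,m}(w_1,\ldots,w_{l-m+1}),
\]
so the whole inner double sum becomes $\sum_{l=1}^{i}\sum_{m=1}^{l}\frac{1}{l!}\p^m F_{i-l}^j\,B_{l,m}$. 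Within this, the only piece linear in $w_i^j$ is $l=i$, $m=1$, for which $F_{i-l}=F_0$ and $B_{i,1}(w_1,\ldots,w_i)=w_i$; after multiplying by $i!$ it reads $\p F_0^j\,w_i^j$. Isolating it gives $\dot w_i^j=\p F_0^j\,w_i^j+\Phi_i^j$, where $\Phi_i^j$ collects $i!F_i^j$, the terms $\p^m F_0^j\,B_{i,m}$ with $m\geq2$, and the terms $(i!/l!)\p^m F_{i-l}^j\,B_{l,m}$ with $l\leq i-1$, all of which involve only $w_m^j$ with $m<i$ and are therefore already known at this stage of the recursion.

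Finally I would close the induction by applying variation of parameters to $\dot w_i^j=\p F_0^j\,w_i^j+\Phi_i^j$ on each interval. It remains to verify that $Y_j$ is a fundamental matrix of $\dot y=\p F_0^j(t,x(t,z,0))y$ on $[t_{j-1},t_j]$: indeed $\p x_j/\p z(t,z,0)$ solves this equation, and right multiplication by the constant matrix $\left(\p x_j/\p z(t_{j-1},z,0)\right)^{-1}Y_{j-1}(t_{j-1},z)$ preserves solutions, while evaluating \eqref{defined1} at $t=t_{j-1}$ shows $Y_j(t_{j-1},z)=Y_{j-1}(t_{j-1},z)$, so the matching value $w_i^{j-1}(t_{j-1},z)$ is transported correctly across the breakpoint. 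The variation of parameters formula with the nonzero initial datum then produces precisely the asserted expressions, completing the induction. I expect no genuine difficulty beyond the careful bookkeeping in the combinatorial identity and the tracking of boundary terms across the breakpoints $t_1,\ldots,t_{n-1}$.
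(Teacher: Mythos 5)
Your proof is correct and follows essentially the same route as the paper's: it converts the integral recurrence \eqref{w11} into a linear Cauchy problem on each interval $[t_{j-1},t_j]$ and solves it by variation of parameters with the fundamental matrix $Y_j$ from \eqref{defined1}. The paper only sketches this argument (treating $i=j=1$ explicitly and citing \cite{LliNovRod} for the rest), so your explicit handling of the Bell-polynomial identity and the isolation of the linear term $\p F_0^j\,w_i^j$ simply supplies the details the paper delegates to that reference.
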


\begin{proof}
The idea of the proof is to relate the integral equations \eqref{w11}
to the Cauchy problem and then solve it. For example, if $i=j=1$ the
integral equation is equivalent to the following Cauchy problem
\[
\dfrac{\p w_1^1}{\p t}(t,z)=F_1^1\left(t,x(t,z,0)\right)+\p
F_0^1\left(t,x(t,z,0)\right)w_1^1 \,\,\text{ with }\,\, w_1^1(0,z)=0,
\]
and solving this linear differential equation we get the expression
of $w_1^1(t,z)$ described in the statement of the lemma. For more
details see \cite{LliNovRod}.
\end{proof}

The next provides a formula for the average functions
\eqref{averagefunctions} for the class of non-smooth differential
systems studied in this paper.

\begin{proposition}
\label{equality} For $i=1,2,\ldots,k$, the average function
\eqref{averagefunctions} of order $i$ is \begin{equation*}
\label{naveregedfunction} g_i(z)=Y_n^{-1}(T,z) \frac{w_i^n(T,z)}{i!}.
\end{equation*}
\end{proposition}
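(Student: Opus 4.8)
The plan is to prove the stronger, more local statement that the globally defined functions $y_i(t,z)$ from \eqref{y1} agree, on each subinterval, with the piecewise functions $w_i^j(t,z)$ from \eqref{w11}, and then to read off the claimed identity at $t=T$. I would work with the integral-equation characterization \eqref{y1} of the $y_i$ recorded in Remark \ref{remark}, rather than the closed form \eqref{y}, since \eqref{y1} runs exactly parallel to the recurrence \eqref{w11}. The first preliminary observation is that for the piecewise system \eqref{perturbedsystem} one has $F_\ell(s,x(s,z,0))=F_\ell^j(s,x(s,z,0))$ and $\partial^L F_\ell(s,x(s,z,0))=\partial^L F_\ell^j(s,x(s,z,0))$ whenever $s\in(t_{j-1},t_j)$. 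The second is that in each of the relations \eqref{y1} and \eqref{w11} the unknown of order $i$ enters the right-hand side only linearly, through the single term $\partial F_0(\cdot)\,y_i$, respectively $\partial F_0^j(\cdot)\,w_i^j$: a factor $y_i$ can appear in a product $\bigodot_{j'=1}^l y_{j'}^{b_{j'}}$ only when $l=i$ and $(b_1,\ldots,b_i)=(0,\ldots,0,1)$, which forces $F_{i-l}=F_0$ and $L=1$. Consequently each relation is a linear Volterra integral equation with continuous kernel and therefore has a unique solution once the lower-order data and the value at the left endpoint are prescribed.

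The core step is the claim that $w_i^j(t,z)=y_i(t,z)$ for every $t\in[t_{j-1},t_j]$, which I would establish by induction on $j\in\{1,\ldots,n\}$ with a nested induction on $i\in\{1,\ldots,k\}$. For the base case $j=1$, on $[0,t_1]$ every $F_\ell$ coincides with $F_\ell^1$, so the defining recurrence \eqref{w11} for $w_i^1$ is literally the integral equation \eqref{y1} for $y_i$, with the common value $0$ at $t=0$; Volterra uniqueness then gives $w_i^1=y_i$ on $[0,t_1]$. For the inductive step $j\ge 2$, I would split $\int_0^t=\int_0^{t_{j-1}}+\int_{t_{j-1}}^t$ in \eqref{y1}. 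By the outer induction hypothesis the first integral equals $y_i(t_{j-1},z)=w_i^{j-1}(t_{j-1},z)$, which is precisely the boundary term carried by \eqref{w11}; on the remaining interval $[t_{j-1},t]$ every $F_\ell$ equals $F_\ell^j$, and by the nested induction on $i$ the lower-order factors satisfy $y_m=w_m^j$ there. Hence $y_i$ and $w_i^j$ solve the same linear Volterra equation on $[t_{j-1},t_j]$ with the same left-endpoint value, and uniqueness yields $w_i^j=y_i$ on $[t_{j-1},t_j]$. (If one prefers, the solved forms of Lemma \ref{lemmaexplicit} together with the telescoping relation \eqref{defined1} give an equivalent, computational route to the same conclusion.)

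Finally I would specialize the claim to $j=n$ and $t=T=t_n$, obtaining $w_i^n(T,z)=y_i(T,z)$. The fundamental matrix $Y$ built in \eqref{defined1} satisfies $Y(t,z)=Y_n(t,z)$ on $[t_{n-1},T]$, so in particular $Y(T,z)=Y_n(T,z)$. Substituting both identities into the definition \eqref{averagefunctions} gives
\[
g_i(z)=Y^{-1}(T,z)\,\frac{y_i(T,z)}{i!}=Y_n^{-1}(T,z)\,\frac{w_i^n(T,z)}{i!},
\]
which is the assertion of the proposition.

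I expect the main obstacle to be the bookkeeping in the inductive step: verifying that the combinatorial coefficients and the multilinear terms $\partial^L F_{i-l}\bigodot_{j'=1}^l y_{j'}^{b_{j'}}$ of the global equation \eqref{y1} match, piece by piece over each $[t_{j-1},t_j]$, the corresponding terms of \eqref{w11} once the lower-order identities $w_m^j=y_m$ are in force, and confirming rigorously that the order-$i$ unknown enters each equation only through the single linear term $\partial F_0$ (so that Volterra uniqueness genuinely applies and the continuity across the breakpoints $t_{j-1}$ is correctly encoded by the term $w_i^{j-1}(t_{j-1},z)$).
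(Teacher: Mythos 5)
Your proof is correct, and its core idea coincides with the paper's: show that the globally defined $y_i$ of \eqref{y1} and the piecewise $w_i^j$ of \eqref{w11} satisfy the same equations with the same initial data, then invoke uniqueness and evaluate at $t=T$, using $Y(T,z)=Y_n(T,z)$. The difference lies in the organization and in what is taken for granted. The paper first glues the pieces into a single function $w_i(t,z)=\sum_{j}\chi_{[t_{j-1},t_j]}(t)\,w_i^j(t,z)$, cites the proof of Proposition 2 of \cite{LliNovRod} for the fact that this glued function satisfies the global integral equation \eqref{expressionz1} (identical to \eqref{y1}), and then differentiates and applies the existence-and-uniqueness theorem for ODEs once, globally. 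You instead inline that gluing step as an explicit double induction (outer on the interval index $j$, nested on the order $i$), splitting $\int_0^t=\int_0^{t_{j-1}}+\int_{t_{j-1}}^t$ so that the boundary term $w_i^{j-1}(t_{j-1},z)$ of \eqref{w11} appears naturally, and you conclude with uniqueness for linear Volterra integral equations on each subinterval. This buys two things: your argument is self-contained (no appeal to \cite{LliNovRod}), and staying at the level of integral equations avoids a small subtlety the paper glosses over, namely that differentiating \eqref{expressionz1} across the breakpoints $t_j$ only yields one-sided derivatives since the integrand jumps there, so the ODE-uniqueness step really has to be read piecewise anyway. Your preliminary observation that the order-$i$ unknown enters each recurrence only through the single linear term $\p F_0\cdot y_i$ (forced by $l=i$, $b_i=1$, hence $L=1$ and $F_{i-l}=F_0$) is exactly the right justification for why Volterra uniqueness applies, and it is checked correctly; the price of your route is precisely this combinatorial bookkeeping, which the paper avoids by delegation.
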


\begin{proof}
For each $i=1,2,\ldots,k$ we define
\begin{equation*}\label{zprom}
w_i(t,z)=\sum_{j=1}^{n}\chi_{[t_{j-1},t_j]}(t)w_i^j(t,z).
\end{equation*}
Given $t \in [0,T]$ there exists a positive integer $\bar{k}$ such
that $t \in (t_{\bar{k}-1}, t_{\bar{k}}]$ and, therefore,
$w_i(t,z)=w_i^{\bar{k}}(t,z)$. By applying the proof of Proposition $2$ of
\cite{LliNovRod}, we obtain
\begin{equation}\label{expressionz1}
\begin{array}{RL}
w_1(t,z)=&\int_0^{t} \bigg( F_1(s,x(s,z,0)) + \partial F_0(s,x(s,z,0))w_1(s,z) \bigg)ds,
\vspace{0.3cm}\\
w_i(t,z)=&i!\int_{0}^{\T}\bigg(F_i(s,x(s,z,0))+\\&\sum_{l=1}^{i}\sum_{S_l}\dfrac{1}{b_1!\,b_2!2!^{b_2}\cdots b_l!l!^{b_l}}\cdot\partial ^L F_{i-l}(s,x(s,z,0)) \bigodot_{m=1}^l
w_m(s,z)^{b_m} \bigg)ds.
\end{array}
\end{equation}

Since according to Remark \ref{remark} we can consider functions \eqref{y}
as implicitly provided, we compute the derivatives in the variable $t$ of
functions \eqref{expressionz1} and \eqref{y1} for $i=1$, and we
see that the functions $w_1(t,z)$ and $y_1(t,z)$ satisfy the same
differential equation. Moreover, for each $i=2,\ldots,k$, the integral
equations \eqref{y1} and \eqref{expressionz1}, which provide
respectively $y_i$ and $w_i$, are defined by the same recurrence. Then,
functions $y_i$ and $w_i$ satisfy the same differential equations
for $i=1,2,\ldots,k$, and their initial conditions coincide. Indeed,
let $i \in \{1,2,\ldots,k\}$, since $y_i(0,z)=0$ and, by
\eqref{expressionz1}, $w_i(0,z)=0$, it follows that the initial
conditions are the same. Applying the Existence and Uniqueness
Theorem on the solutions of the differential system we get
$y_i(t,z)=w_i(t,z)$, for all $i \in \{1,2,\ldots,k\}$.
\end{proof}

\subsection{Bifurcation Functions}

In this subsection, we shall write the bifurcation functions
\eqref{bifurcationfunction} and the functions $\gamma_i(\al)$ provided
by \eqref{gammadefi} in terms of Bell polynomials.

\begin{claim}
The bifurcation function \eqref{bifurcationfunction} is provided by
\[f_i(\al)=\pi g_i(z_{\al}) +  \sum_{l=1}^i \sum_{m=1}^l
\dfrac{1}{l!} \p^m_b \pi g_{i-l}(z_\al)
B_{l,m}(\gamma_1(\al),\ldots,\gamma_{l-m+1}(\al)),\] where
\begin{equation*}
\begin{array}{RL}
\gamma_1(\alpha)=& -\Delta_{\al}^{-1} \pi^{\perp} g_1(z_{\al}) \quad \text{and}\\
\gamma_i(\alpha)=&-\Delta_{\al}^{-1}\bigg(\sum_{l=0}^{i-1} \dfrac{i!}{l!}
\sum_{m=1}^{l} \p_b^m \pi^{\perp}g_{i-l}(z_\al) B_{l,m}(\gamma_1(\al),\ldots,
\gamma_{l-m+1}(\al)) \\
&+\sum_{m=2}^{i} \p_b^m \pi^{\perp}g_{0}(z_\al)
B_{i,m}(\gamma_1(\al),\ldots,\gamma_{i-m+1}(\al)) \bigg).
\end{array}
\end{equation*}
\end{claim}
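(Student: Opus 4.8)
The plan is to prove the identity purely combinatorially, with no analysis involved: I would reorganize the sums over $S_l$ and $S'_i$ appearing in \eqref{bifurcationfunction} and \eqref{gammadefi} according to the value of the order $L=c_1+\cdots+c_l$ (respectively $I'=c_1+\cdots+c_{i-1}$), and then recognize each resulting inner sum as a partial Bell polynomial. The entire content is a matching of index sets together with a matching of combinatorial coefficients.

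First I would treat the bifurcation function $f_i$. Fix $l\in\{1,\ldots,i\}$ and consider the inner sum $\sum_{S_l}$. As $(c_1,\ldots,c_l)$ ranges over $S_l$, the order $L=c_1+\cdots+c_l$ takes values in $\{1,\ldots,l\}$, so I would split $\sum_{S_l}=\sum_{m=1}^{l}\sum_{\{S_l\,:\,L=m\}}$. For fixed $l$ and $m$, any tuple with $\sum_j jc_j=l$ and $\sum_j c_j=m$ must satisfy $c_j=0$ for $j>l-m+1$: if some $c_J>0$ with $J\ge l-m+2$, the remaining $m-1$ parts contribute at least $m-1$, forcing $\sum_j jc_j\ge J+(m-1)\ge l+1$, a contradiction. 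Hence these tuples are exactly the index set $\widetilde S_{l,m}$ of $B_{l,m}$. The coefficient then factors as
\[
\frac{1}{c_1!\,c_2!2!^{c_2}\cdots c_l!l!^{c_l}}
=\frac{1}{l!}\cdot\frac{l!}{c_1!\cdots c_l!}\prod_{j=1}^{l-m+1}\frac{1}{(j!)^{c_j}},
\]
and, since $\p_b^{m}\pi g_{i-l}(z_\al)$ is a symmetric $m$--multilinear map applied to $\bigodot_{j}\gamma_j(\al)^{c_j}$, summing over $\widetilde S_{l,m}$ reproduces exactly $\frac1{l!}\p_b^{m}\pi g_{i-l}(z_\al)B_{l,m}(\gamma_1(\al),\ldots,\gamma_{l-m+1}(\al))$. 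Summing over $m$ and $l$ then yields the announced formula for $f_i$.

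Then I would repeat the argument for the two sums in \eqref{gammadefi}. The sum over $S_l$ weighted by $\p_b^{L}\pi^\perp g_{i-l}$ (for $l=1,\ldots,i-1$) transforms as above into $\sum_{l=1}^{i-1}\frac{i!}{l!}\sum_{m=1}^{l}\p_b^{m}\pi^\perp g_{i-l}(z_\al)B_{l,m}$ once the global prefactor $-i!\,\Delta_\al^{-1}$ is distributed. For the sum over $S'_i$ (the $g_0$ term) the same reindexing by $m=I'$ applies, with the one difference that the constraint $\sum_{j=1}^{i-1}jc_j=i$ (no index $i$ being available) forbids $m=1$, so $m$ ranges only over $\{2,\ldots,i\}$; this produces $\sum_{m=2}^{i}\p_b^{m}\pi^\perp g_0(z_\al)B_{i,m}$ after the factor $i!$ cancels against the $1/i!$ inside $B_{i,m}$. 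Collecting the two pieces gives the stated recurrence for $\gamma_i$, the $l=0$ summand there being vacuous because $\sum_{m=1}^{0}$ is empty.

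The only delicate point — and hence the step I would carry out most carefully — is the verification that $\{(c_1,\ldots,c_l)\in S_l:\ c_1+\cdots+c_l=m\}$ coincides with $\widetilde S_{l,m}$ under the truncation $c_j=0$ for $j>l-m+1$, together with the coefficient match after pulling out $1/l!$; everything else is bookkeeping. For consistency I would also note that the general expression is valid for $i\ge 2$, the case $\gamma_1=-\Delta_\al^{-1}\pi^\perp g_1(z_\al)$ being recorded separately, since the right-hand side of the general formula vanishes identically at $i=1$.
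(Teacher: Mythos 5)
Your proof is correct, and it is more self-contained than the paper's. The paper disposes of this Claim in two sentences: it recalls that \eqref{bifurcationfunction} and \eqref{gammadefi} were obtained in \cite{LliNovCand} by applying Fa\'a di Bruno's formula in its multi-index form, and then asserts that using instead the Bell-polynomial version of Fa\'a di Bruno (citing \cite{LliNovCand, N}) yields the stated expressions; no computation is carried out. You instead take the formulas \eqref{bifurcationfunction}--\eqref{gammadefi} as the starting point and transform them directly: partition $S_l$ (resp.\ $S'_i$) according to the order $m=L$ (resp.\ $m=I'$), show by the truncation argument that each slice is exactly $\widetilde S_{l,m}$ (resp.\ $\widetilde S_{i,m}$, with $m=1$ excluded since an $(i-1)$-tuple with $\sum_j jc_j=i$ cannot have $\sum_j c_j=1$), and match coefficients after extracting $1/l!$ (resp.\ absorbing the prefactor $i!$ into $B_{i,m}$). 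This is precisely the combinatorial identity that underlies the equivalence of the two forms of Fa\'a di Bruno's formula, so the mathematical content is the same; what your route buys is that the reader need not revisit the derivation in \cite{LliNovCand} at all, and the edge cases (the vacuous $l=0$ summand, the range $m\in\{2,\ldots,i\}$ for the $g_0$ term, and the interpretation of $B_{l,m}$ with vector arguments fed into the symmetric multilinear map $\p_b^m$) are checked explicitly rather than left implicit. What the paper's route buys is brevity and consistency with the provenance of the formulas. One tiny wording quibble: $B_{i,m}$ has $i!$ in its numerator rather than a $1/i!$ ``inside,'' so it is cleaner to say the prefactor $i!$ is exactly the factorial appearing in the definition of $B_{i,m}$; your arithmetic, however, is right.
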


\begin{proof}
Expression \eqref{bifurcationfunction} was obtained in \cite{LliNovCand} by using the Fa\'{a} di Bruno's formula for the $L$-th derivative of a composite function. This claim follows by applying the version of Fa\'{a} di Bruno's  formula in terms of Bell polynomials (see \cite{LliNovCand, N}).
\end{proof}

\section{Proof of Theorem \ref{maintheorem}}\label{proof}
For $j=1,2,\ldots,n$ let $\xi_j(t,t_{j-1},z_0,\e)$ be the solution of the
differential system \eqref{sector} such that
$\xi_j(t_{j-1},t_{j-1},z_0,\e)=z_0.$  Then, we define the recurrence

\begin{equation*}
\begin{split}
&x_1(t,z,\e)=\xi_1(t,0,z,\e)\\
&x_j(t,z,\e)=\xi_j(t,t_{j-1},x_{j-1}(t_{j-1},z,\e),\e),\quad
j=2,\ldots,n.
\end{split}
\end{equation*}
Since we are working in the cross region, it is easy to see that, for
$|\e| \neq 0$ sufficiently small, each $x_j(t,z,\e)$ is defined for
every $t\in[t_{j-1},t_j]$. Therefore $x(\cdot,z,\e):[0,T] \to \R^m$ is
defined as
\begin{equation*}
\label{sol} x(t,z,\e)=
\begin{cases}
x_1(t,z,\e) & \text{if $0=t_0 \leq t \leq t_1$},\\
x_2(t,z,\e) & \text{if $t_1 \leq t \leq t_2$},\\
\vdots \\
x_j(t,z,\e) & \text{if $t_{j-1} \leq t \leq t_j$},\\
\vdots \\
x_n(t, z,\e) & \text{if $t_{n-1} \leq t \leq t_n=T$}.
\end{cases}
\end{equation*}
Notice that $x(t,z,\e)$ is the solution of the differential equation
\eqref{unperturbedsystem} such that $x(0,z,\e)=z$. Moreover, the
following equality hold
\begin{equation*}\label{initial}
x_j(t_{j-1},z,\e)=x_{j-1}(t_{j-1},z,\e),
\end{equation*}
for $j=1,2,\ldots, n$.

The next lemma expands the solution $x_j(\cdot,z,\e)$ around $\e=0$.

\begin{lemma}\label{lemma}
For $j \in \{1,2,\ldots,n\}$ and $t_{z}^{j}>t_j,$ let
$x_j(\cdot,z,\e):[t_{j-1},t_{j})$ be the solution of \eqref{sector}.
Then
\[
x_j(t,z,\e)=x_j(t,z,0)+\sum_{i=1}^k \frac{\e^i}{i!} w_i^j(t,z) +
\CO(\e^{k+1}).
\]
\end{lemma}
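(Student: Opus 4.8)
The plan is to prove Lemma~\ref{lemma} by induction on the sector index $j$, establishing in each sector the Taylor expansion of $x_j(\cdot,z,\e)$ in $\e$ and identifying the coefficients with the functions $w_i^j$ defined recurrently in \eqref{w11}. First I would treat the base case $j=1$. Here $x_1(\cdot,z,\e)=\xi_1(\cdot,0,z,\e)$ solves the smooth system \eqref{sector} on $[t_0,t_1]$ with initial condition $x_1(0,z,\e)=z$, which is independent of $\e$. Since the right-hand side $F_1(t,x;\e)=\sum_{i=0}^k\e^i F_i^1(t,x)+\e^{k+1}R^1(t,x,\e)$ is $C^{k+1}$ jointly in its arguments, the solution depends $C^{k+1}$-smoothly on $\e$, so it admits a Taylor expansion $x_1(t,z,\e)=\sum_{i=0}^k (\e^i/i!)\,\p_\e^i x_1(t,z,0)+\CO(\e^{k+1})$. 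Writing $w_i^1(t,z):=\p_\e^i x_1(t,z,0)$, I would substitute the ansatz into the ODE $\p_t x_1=F_1(t,x_1;\e)$, expand both sides in powers of $\e$ using the multivariate Taylor/Fa\`a di Bruno expansion of $F_1(t,x_1;\e)$ around $x_1(t,z,0)$, and match coefficients of $\e^i$. The order-zero term gives that $x_1(t,z,0)$ solves $x'=F_0^1(t,x)$, and matching the $\e^i$ coefficients reproduces exactly the integral recurrence \eqref{w11} for $w_i^1$, together with the initial condition $w_i^1(0,z)=0$ coming from the $\e$-independence of the initial datum $z$.

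Next I would carry out the inductive step. Assume the expansion holds for $x_{j-1}$, so in particular $x_{j-1}(t_{j-1},z,\e)=x_{j-1}(t_{j-1},z,0)+\sum_{i=1}^k(\e^i/i!)w_i^{j-1}(t_{j-1},z)+\CO(\e^{k+1})$. The solution $x_j(\cdot,z,\e)=\xi_j(\cdot,t_{j-1},x_{j-1}(t_{j-1},z,\e),\e)$ on $[t_{j-1},t_j]$ depends $C^{k+1}$-smoothly on both $\e$ and on its initial value, which itself is a $C^{k+1}$ function of $\e$. Hence $x_j(t,z,\e)$ is $C^{k+1}$ in $\e$ and admits the analogous Taylor expansion; I set $w_i^j(t,z):=\p_\e^i x_j(t,z,0)$. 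The same coefficient-matching argument as in the base case shows $w_i^j$ satisfies the integral equation in \eqref{w11} on $[t_{j-1},t]$, but now the initial condition at $t=t_{j-1}$ is not zero. Instead, differentiating the matching condition $x_j(t_{j-1},z,\e)=x_{j-1}(t_{j-1},z,\e)$ in $\e$ gives $w_i^j(t_{j-1},z)=w_i^{j-1}(t_{j-1},z)$, which is precisely the boundary term $w_i^{j-1}(t_{j-1},z)$ appearing in the third line of \eqref{w11}. This closes the induction.

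The routine part is the coefficient-matching: expanding $F_{i-l}^j(t,x_j;\e)$ around $x_j(t,z,0)$ via Fa\`a di Bruno produces the multilinear terms $\p^L F_{i-l}^j\bigodot w_m^j{}^{b_m}$ with the combinatorial weights indexed by $S_l$, exactly matching \eqref{w11}; this is essentially the single-chart computation already performed for the smooth case in \cite{LliNovCand,LliNovTeiN2014} and for the non-smooth case in \cite{LliNovRod}, so I would cite these rather than redo the algebra. The main obstacle, and the genuinely non-smooth feature that must be handled carefully, is the propagation of the expansion across the switching times $t_{j-1}$: one must verify that the order-$i$ coefficient is transmitted continuously through the discontinuity, i.e.\ that $\p_\e^i$ commutes with evaluation at $t_{j-1}$ and that the matching condition $x_j(t_{j-1},z,\e)=x_{j-1}(t_{j-1},z,\e)$ can legitimately be differentiated $k$ times in $\e$. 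This is justified because we work entirely in the crossing region ($\Sigma=\Sigma^c$), so each $\xi_j$ is a genuine smooth flow on its closed sector and the composition defining $x(\cdot,z,\e)$ inherits $C^{k+1}$ dependence on $\e$; there is no sliding or tangency to spoil the smoothness of the transition map. Once this transmission of the boundary data is established, the lemma follows.
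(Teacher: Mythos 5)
Your proposal is correct and follows essentially the same route as the paper: $C^{k+1}$ dependence of the solution on $\e$ in each sector, Fa\`a di Bruno expansion of $F_i^j$ along the solution, coefficient matching against the integral recurrence \eqref{w11}, and transmission of the Taylor data across the switching times via the matching condition $x_j(t_{j-1},z,\e)=x_{j-1}(t_{j-1},z,\e)$, justified by working in the crossing region. The only difference is organizational: the paper packages the identification of the Taylor coefficients with the $w_i^j$ as two claims whose proofs are cited from \cite{LliNovRod}, whereas you unpack the same argument as an explicit induction on the sector index $j$.
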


\begin{proof}
First, having fixed $j\in\{1,2,\ldots,n\}$, we use the continuity of solution $x_j(t,z,\e)$ and the compactness of set
$[t_{j-1},t_{j}]\times\overline{D}\times[-\e_0,\e_0]$ to arrive at
\[
\int_{t_{j-1}}^{t} R^j(s,x_j(s,z,\e),\e)ds= \CO(\e), \quad t\in
[t_{j-1},t_{j}].
\]
Thus, integrating the differential equation \eqref{sector} from
$t_{j-1}$ to $t$, we get
\begin{equation}\label{fundlemma}
\begin{array}{RL}
x_j(t,z,\e) &= x_j(t_{j-1},z,\e) + \sum_{i=0}^k \e^i \int_{t_{j-1}}^{t}
F_i^j(s,x_j(s,z,\e)) ds + \CO(\e^{k+1}), \quad \text{and}\\
x_j(t,z,0) &= x_j(t_{j-1},z,0) + \int_{t_{j-1}}^{t}
F_0^j(s,x_j(s,z,0)) ds.
\end{array}
\end{equation}

Because of the differentiable dependence of the solutions of a differential
system on its parameters, the function $\e \mapsto x_j(t,z,\e)$ is a
$C^{k+1}$ map. The next step is to compute the Taylor expansion
of $F_i^j(t,x_j(t,z,\e))$ around $\e=0$, and for this we use the
Fa\'{a} di Bruno's Formula about the $l$-th derivative of a composite
function, which guarantees that, if $g$ and $h$ are sufficiently smooth
functions, then
\[
\dfrac{d^l}{d\al^l}g(h(\al))= \sum_{S_l}
\dfrac{l!}{b_1!\,b_2!2!^{b_2}\ldots b_l!l!^{b_l}}g^{(L)}
(h(\al))\bigodot_{j=1}^l\left(h^{(j)}(\al)\right)^{b_j},
\]
where $S_l$ is the set of all $l$-tuples of non-negative integers
$(b_1,b_2,\ldots,b_l)$ satisfying $b_1+2b_2+\ldots+lb_l=l$, and
$L=b_1+b_2+\ldots+b_l$.

For each $i=0,1,...,k-1,$ expanding $F_i^j(s,x_j(s,z,\e))$ around $\e=0$ we get
\begin{equation}\label{s10}
\begin{array}{RL}
F_i^j(s,x_j(s,z,\e))  =&  F_i^j(s,x_j(s,z,0))+\\
&\sum_{l=1}^{k-i}  \sum_{S_l}\dfrac{\e^l}{b_1!\,b_2!2!^{b_2}\ldots
b_l!l!^{b_l}} \partial ^L F_i^j(s,x_j(s,z,0)) \bigodot_{m=1}^l
r_m^j(s,z)^{b_m},
\end{array}
\end{equation}
where
\[
r_m^j(s,z)=\frac{\p^m}{\p \e^m}x_j(s,z,\e)\Big|_{\e=0},
\]
and for $i=k$
\begin{equation}\label{s11}
F_k^j(s,x_j(s,z,\e))= F_k^j(s,x_j(s,z,0)) + \CO(\e).
\end{equation}

Substituting \eqref{s10} and \eqref{s11} in \eqref{fundlemma} we get
\begin{equation*}\label{delta}
\begin{array}{RL}
x_j(t,z,\e)=
&x_{j}(t_{j-1},z,\e) + \int_{t_{j-1}}^t\Bigg( \sum_{i=0}^k \e^i
F_i^j(s,x_j(s,z,0)) ds\\
&+\sum_{i=0}^{k-1} \sum_{l=1}^{k-i} \e^{l+i}
\sum_{S_l}\dfrac{1}{b_1!\,b_2!2!^{b_2}\ldots
b_l!l!^{b_l}}\\
&\cdot \partial ^L F_i^j(s,x_j(s,z,0)) \bigodot_{m=1}^l
r_m^j(s,z)^{b_m}\Bigg)ds+ \CO(\e^{k+1}).
\end{array}
\end{equation*}
Then, the proof of the lemma ends using the next two claims.

\begin{claim}\label{claim1}
For $j=1,2,\ldots,n$ we have
\[
x_j(t,z,\e)=x_j(t,z,0)+\sum_{i=1}^k \frac{\e^i}{i!} r_i^j(t,z) +
\CO(\e^{k+1}).
\]
\end{claim}

\begin{claim}
	\label{claim2}
The equality  $r_i^j=w_i^j$ holds for $i=1,2,\ldots,k$ and
$j=1,2,\ldots,n.$
\end{claim}
The proof of Claims \ref{claim1} and \ref{claim2} is achievable by following the steps described in the proof of Claims 1 and 2 of \cite{LliNovRod}, respectively.
\end{proof}

\begin{proof}[Proof of Theorem \ref{maintheorem}]
Consider the \textit{displacement function}
\begin{equation}
\label{displacement} h(z,\e)=x(T,z,\e)-z=x_n(T,z,\e)-z
\end{equation}

It is easy to see that $x(\cdot,\overline{z},\overline{\e})$ is a
$T$-periodic solution if and only if
$h(\overline{z},\overline{\e})=0$. Moreover, to study the zeros of
\eqref{displacement} is equivalent to study the zeros of
\begin{equation}
\label{functiong} g(z,\e)=Y_n^{-1}(T,z) h(z,\e).
\end{equation}

Based on Lemma \ref{lemma}, we have
\begin{equation}
\label{xn} x_n(T,z,\e)=x_n(T,z,0)+\sum_{i=1}^k \frac{\e^i}{i!}
w_i^n(T,z) + \CO(\e^{k+1}),
\end{equation}
for all $(t,z) \in \mathbb{S}^1 \times D$. By replacing \eqref{xn} in
\eqref{functiong}, it follows that
\begin{equation}
\label{functiongg}
\begin{array}{RL}
g(z,\e)=&Y_n^{-1}(T,z)\left(x_n(T,z,0)-z+\sum_{i=1}^k \frac{\e^i}{i!}
w_i^n(T,z) + \CO(\e^{k+1})\right)\\
=&Y_n^{-1}(T,z)(x_n(T,z,0)-z) +\sum_{i=1}^k g_i(z) + \CO(\e^{k+1})\\
=&\sum_{i=0}^k g_i(z) + \CO(\e^{k+1}),
\end{array}
\end{equation}
where $g_0(z)=Y_n^{-1}(T,z)(x_n(T,z,0)-z)$.

From hypothesis \text{$(H_{a}$)} the function $g_0(z)$ vanishes on the submanifold
$\mathcal{Z}$, therefore hypothesis \text{$(H_{a}$)}  holds for function \eqref{functiongg}. In order to take the derivative of
$g_0(z)$ with respect to the variable $z$, we have the next claim.

\begin{claim}\label{lastclaim} For every $j \in \{1,2,\ldots,n\}$
\[Y_j(t_j,z)=\dfrac{\p x_j}{\p z}(t_j,z,0).\]
\end{claim}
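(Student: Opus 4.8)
The plan is to argue by induction on $j$, feeding the recursive definition \eqref{defined1} with the matching condition that the pieces $x_j$ satisfy at the switching times. Throughout I would abbreviate $\Phi_j(t,z)=\frac{\p x_j}{\p z}(t,z,0)$, so that the claim to be proved reads simply $Y_j(t_j,z)=\Phi_j(t_j,z)$, and so that the second line of \eqref{defined1} becomes $Y_j(t,z)=\Phi_j(t,z)\,\Phi_j(t_{j-1},z)^{-1}Y_{j-1}(t_{j-1},z)$.

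First I would record the one genuinely substantive ingredient, namely that the pieces agree at the switching instants together with their $z$-derivatives. From the recurrence $x_j(t,z,\e)=\xi_j(t,t_{j-1},x_{j-1}(t_{j-1},z,\e),\e)$ and the normalization $\xi_j(t_{j-1},t_{j-1},z_0,\e)=z_0$, evaluating at $t=t_{j-1}$ gives $x_j(t_{j-1},z,\e)=x_{j-1}(t_{j-1},z,\e)$ for all admissible $\e$, which is exactly the continuity relation already noted just before Lemma \ref{lemma}. Differentiating this identity with respect to $z$ and setting $\e=0$ yields
\[
\Phi_j(t_{j-1},z)=\Phi_{j-1}(t_{j-1},z).
\]
I would also note in passing that each $\Phi_j(\cdot,z)$ is a fundamental matrix solution of the variational equation \eqref{totalvarsys} on $[t_{j-1},t_j]$ (it is a solution, as observed in the text, and it is nonsingular by Liouville's formula since $\Phi_1(0,z)=Id$ and each subsequent $\Phi_j(t_{j-1},z)$ is nonsingular by the previous display), so that the inverse $\Phi_j(t_{j-1},z)^{-1}$ appearing in \eqref{defined1} is well defined.

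Then the induction runs cleanly. The base case $j=1$ is immediate from the first line of \eqref{defined1}: $Y_1(t,z)=\Phi_1(t,z)$, hence $Y_1(t_1,z)=\Phi_1(t_1,z)$. For the inductive step I assume the claim at index $j-1$, which in particular gives $Y_{j-1}(t_{j-1},z)=\Phi_{j-1}(t_{j-1},z)$ (this is precisely the claim for $j-1$ evaluated at its own right endpoint $t_{j-1}$). Substituting this into the second line of \eqref{defined1} and evaluating at $t=t_j$,
\[
Y_j(t_j,z)=\Phi_j(t_j,z)\,\Phi_j(t_{j-1},z)^{-1}\,Y_{j-1}(t_{j-1},z)
=\Phi_j(t_j,z)\,\Phi_j(t_{j-1},z)^{-1}\,\Phi_{j-1}(t_{j-1},z).
\]
By the matching identity of the previous paragraph, $\Phi_j(t_{j-1},z)^{-1}\Phi_{j-1}(t_{j-1},z)=Id$, so the right-hand side collapses to $\Phi_j(t_j,z)=\frac{\p x_j}{\p z}(t_j,z,0)$, which closes the induction.

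The hard part is entirely concentrated in the matching step $\Phi_j(t_{j-1},z)=\Phi_{j-1}(t_{j-1},z)$ and the attendant invertibility; everything else is bookkeeping around \eqref{defined1}. I would therefore take care to justify the differentiation of the continuity relation $x_j(t_{j-1},z,\e)=x_{j-1}(t_{j-1},z,\e)$, since this is where the piecewise concatenation of the global solution actually enters, and to confirm nonsingularity so that the matrix products are legitimate. No estimate or limiting argument is needed, so I do not anticipate any analytic obstacle beyond the smooth dependence of the sector solutions $\xi_j$ on initial conditions, which is already in force in the crossing region.
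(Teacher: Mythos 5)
Your proof is correct and follows essentially the same route as the paper's: induction on $j$, with the base case read off from the definition \eqref{defined1} and the inductive step obtained by differentiating the matching relation $x_j(t_{j-1},z,\e)=x_{j-1}(t_{j-1},z,\e)$ with respect to $z$ and substituting the induction hypothesis into the recursion, so that the middle factors cancel. The only difference is that you make explicit two points the paper leaves implicit (the nonsingularity of $\p x_j/\p z$ and the justification for differentiating the matching relation), which is a sensible addition but not a different argument.
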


The proof will be done by induction on $j$. For $j=1$ the claim is
precisely the definition.  Assume that the claim is valid for
$j=j_0-1$ and we shall prove it for $j=j_0$. Since
$x_j(t_{j-1},z,\e)=x_{j-1}(t_{j-1},z,\e)$ for all $j=1,2,\ldots,n$ we
have
\begin{equation*}
\begin{array}{cl}
Y_{j_0}(t_{j_0},z)&=\dfrac{\p x_{j_0}}{\p
z}(t_{j_0},z,0)\left(\dfrac{\p x_{j_0}}{\p z}(t_{j_0-1},z,0)
\right)^{-1}Y_{j_0-1}(t_{j_0-1},z)\\
&=\dfrac{\p x_{j_0}}{\p z}(t_{j_0},z,0)\left(\dfrac{\p x_{j_0-1}}{\p z}
(t_{j_0-1},z,0)\right)^{-1}\dfrac{\p x_{j_0-1}}{\p z}(t_{j_0-1},z,0)\\
&=\dfrac{\p x_{j_0}}{\p z}(t_{j_0},z,0).
\end{array}
\end{equation*}
Hence, if $z \in \mathcal{Z}$ then
\begin{equation*}
\begin{array}{RL}
\frac{\p g_0}{\p z}(z)= &Y^{-1}(T,z) \left(\frac{\p x}{\p z}(T,z,0)-Id\right)\\
=&Y^{-1}(T,z)(Y(T,z)-Id)\\
=&Id-Y^{-1}(T,z),
\end{array}
\end{equation*}
which by assumption has as its lower right corner $(m-d) \times (m-d)$
matrix $\Delta_{\al}$ nonsingular. From here, the result follows from Proposition \ref{equality} and
Theorem \ref{LStheorem}. \end{proof}

\section{Examples}\label{examples}

This section is devoted to presenting some applications of Theorem \ref{maintheorem}. The first one is a 3D piecewise smooth system for which the plane $y=0$ is the switching manifold and admits a surface $z=f(x,y)$ foliated by periodic solutions. The second one is a 3D piecewise smooth system for which the algebraic variety $xy=0$ is the discontinuity set and the plane $z=0$ has a piecewise constant center. For these systems, we compute some of the bifurcation functions in order to study the persistence of periodic solutions. 

\subsection{non-smooth perturbation of a 3D system}

Let $f:\R^2 \to \R$ and $g:\R^2 \to \R$ be differential functions
such that $g(x,y)=f(x,y)+x \p_yf(x,y)-y\p_xf(x,y)$. Consider the
non-smooth vector field 
\begin{equation}\label{X1X2}
X_{\e}(x,y,z)=\left\{
\begin{array}{ll}
X^+_{\e}(x,y,z),&y>0\\
X^-_{\e}(x,y,z),&y<0
\end{array}\right.
\end{equation}
where
\[
\begin{array}{ll}
X^+_{\e}(x,y,z)=\left(-y +  \e (a_0+a_1 z)+\e^2 (a_2+a_3 z), \,\,
x ,\,\,-z+ g(x,y)\right),\quad\text{and}\\
X^-_{\e}(x,y,z)=\left(-y,\,\, x +  \e b_1 z+\e^2 (b_2+b_3) z,\,\,-z+
g(x,y)\right),
\end{array}
\]
with $a_0,\, a_1, \, a_2, \, b_1, \, b_2, \, b_3 \in \R$. Denote the discontinuity set by $\Sigma=\{(x,y,z) \in \R^3:\, y=0\}$.

Notice that surface $z=g(x,y)$ is an invariant set of the unperturbed vector field $X_0$. Indeed, considering function $\hat f(x,y,z)=z-f(x,y)$, we get
 \[ 
\langle\nabla \hat f(x,y,z),X_0(x,y,z) \rangle\big|_{z=f(x,y)}=0.
\]
Moreover, since $X_0(x,y,f(x,y))=\big(-y,x,x \p_yf(x,y)-y\p_xf(x,y)\big)$, we conclude that the invariant set $z=f(x,y)$ is foliate by periodic solutions.

Next result gives suficient conditions in order to guarantee the persistence of a periodic solution. Consider the function
\begin{equation}
\label{bifur1} f_1(r)=a_1\displaystyle\int_0^{\pi} f(r \cos \phi, r
\sin \phi) \cos \phi d\phi+ b_1 \displaystyle\int_{\pi}^{2\pi} f(r
\cos \phi, r \sin \phi) \sin \phi d\phi.
\end{equation}

\begin{theorem}\label{Texf1}
Consider the piecewise vector field \eqref{X1X2}. Then, for each $r*>0$, such that $f_1(r^*)=0$ and
$f_1'(r^*)\neq0$, there exists a crossing limit cycle $\f(t,\e)$ of
$X$ of period $T_{\e}=2\pi+\CO(\e)$ such that 
$\f(t,\e)=(x^*,y^*,f(x^*,y^*))+\CO(\e)$ with $|(x^*,z^*)|=r^*$.
\end{theorem}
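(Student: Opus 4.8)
The plan is to verify that system \eqref{X1X2} fits the framework of Theorem \ref{maintheorem} and then to identify the first bifurcation function with the explicit integral \eqref{bifur1}. The key is that the discontinuity set $\Sigma=\{y=0\}$ becomes, after passing to polar-type coordinates, a pair of angular switching times, so that the system can be recast into the standard form \eqref{perturbedsystem} with $n=2$ and $T=2\pi$.

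\begin{proof}[Proof of Theorem \ref{Texf1}]
First I would pass to the standard form. Introducing cylindrical coordinates $x=r\cos\phi$, $y=r\sin\phi$, $z=z$, the half-plane $y>0$ corresponds to $\phi\in(0,\pi)$ and $y<0$ to $\phi\in(\pi,2\pi)$, so the two switching angles are $\al_1=\pi$ and $\al_2=2\pi\equiv 0$. Taking $\phi$ as the new independent variable (legitimate since the unperturbed angular velocity $\dot\phi=1+\CO(\e)$ does not vanish near the periodic orbits), system \eqref{X1X2} becomes a $2\pi$-periodic piecewise smooth system of the type \eqref{perturbedsystem} in the variables $(r,z)$ with $n=2$ sectors $[0,\pi]$ and $[\pi,2\pi]$. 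Here $m=2$ and the unperturbed system is obtained from $X_0$; since the surface $z=f(x,y)$ is invariant and foliated by $2\pi$-periodic orbits (as computed in the excerpt via $\langle\nabla\hat f,X_0\rangle|_{z=f}=0$), the submanifold $\mathcal Z$ is one-dimensional, parametrized by $r>0$, giving $d=1<m=2$. Thus hypothesis $(H_b)$ holds.

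Next I would verify the nondegeneracy condition (i) of Theorem \ref{maintheorem}. The unperturbed dynamics in the $(r,z)$ variables has $r$ constant along $\mathcal Z$ and a contracting direction transverse to $\mathcal Z$ coming from the $-z$ term in the third component of $X_0$ (which produces the factor $e^{-\al}$ in the fundamental matrix over one angular period). Computing the fundamental matrix $Y(T,z_\al)$ of the variational equation \eqref{totalvarsys} along the periodic orbit, the transverse block yields $\Delta_\al=1-e^{-2\pi}\neq 0$, so condition (i) is satisfied for every point of $\mathcal Z$. This is the verification that lets me reduce the problem to the zeros of the scalar bifurcation function $f_1$.

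Finally I would compute $f_1$ explicitly and match it with \eqref{bifur1}. Since here the unperturbed average function $g_0$ vanishes on $\mathcal Z$ and $k=1$ suffices, the relevant bifurcation function is $f_1(\al)=\pi g_1(z_\al)$ with $g_1$ given by Proposition \ref{equality}. Because $F_0\neq0$ enters only through the drift along the orbit and the two perturbation fields $X^+_\e$, $X^-_\e$ contribute linearly in $\e$ on their respective sectors, the order-one averaged function reduces to the integral of the projected first-order perturbation $\pi^{\perp}$-corrected along the periodic orbit. Carrying out this integral sector by sector, the contribution from $\phi\in(0,\pi)$ produces the $a_1$-term weighted by $\cos\phi$ and the contribution from $\phi\in(\pi,2\pi)$ produces the $b_1$-term weighted by $\sin\phi$, reproducing exactly formula \eqref{bifur1}. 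The hypothesis $f_1(r^*)=0$, $f_1'(r^*)\neq0$ then furnishes a simple zero, so condition (ii) holds with $k=1$ and $\det(Df_1(r^*))\neq0$. Theorem \ref{maintheorem} yields a $2\pi$-periodic solution $\varphi(t,\e)$ of the standard-form system with $|\varphi(0,\e)-z_{r^*}|=\CO(\e)$; translating back through the cylindrical change of variables gives a crossing limit cycle of $X_\e$ of period $T_\e=2\pi+\CO(\e)$ lying $\CO(\e)$-close to the point $(x^*,y^*,f(x^*,y^*))$ with $|(x^*,z^*)|=r^*$.

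The main obstacle I expect is the bookkeeping in the third step: correctly assembling the fundamental matrix $Y(T,z)$ across the two sectors via \eqref{defined1} and tracking how the invariant-surface term $g(x,y)$ (through the relation $g=f+x\p_yf-y\p_xf$) interacts with the first-order perturbations so that the transverse ($z$) components integrate away and only the tangential contributions along $\mathcal Z$ survive to give \eqref{bifur1}. Everything else is a direct application of the already-established machinery.
\end{proof}
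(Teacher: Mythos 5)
Your strategy coincides with the paper's: pass to cylindrical coordinates so that $\Sigma$ becomes the two angular switching lines $\{\T=0\}$ and $\{\T=\pi\}$, put the system in the standard form \eqref{perturbedsystem} with $n=2$, $m=2$, $d=1$ and $\mathcal Z=\{(r,\tilde f(0,r)):r>0\}$, verify condition (i), identify the first bifurcation function with \eqref{bifur1}, and invoke Theorem \ref{maintheorem}. However, there is a genuine gap in your reduction $f_1=\pi g_1$. By \eqref{bifurcationfunction}, the order-one bifurcation function is
\[
f_1(r)=\pi g_1(r,z_r)+\frac{\p \pi g_0}{\p b}(r,z_r)\,\gamma_1(r),\qquad z_r=\tilde f(0,r),
\]
and you discard the second summand on the grounds that ``$g_0$ vanishes on $\mathcal Z$''. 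That is only hypothesis $(H_a)$, which holds in every application of this machinery, and it does not make this term vanish: $\p_b$ differentiates transversally to $\mathcal Z$, and $g_0|_{\mathcal Z}=0$ gives no information about transverse derivatives (indeed $\p_b\pi^{\perp}g_0(z_\al)=\Delta_\al\neq 0$). What actually kills the term --- and what the paper proves --- is the stronger identity $\pi g_0\equiv 0$ everywhere, not just on $\mathcal Z$: the unperturbed standard-form system is $(r',z')=(0,\tilde g(\T,r)-z)$, so the first component of the displacement $x(2\pi,(r,z),0)-(r,z)$ is identically zero, and since $Y^{-1}(2\pi,r,z)$ is lower triangular with first row $(1,0)$, the first component of $g_0(r,z)=Y^{-1}(2\pi,r,z)\big(x(2\pi,(r,z),0)-(r,z)\big)$ vanishes for all $(r,z)$. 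Hence $\p_b\pi g_0\equiv 0$ and $f_1=\pi g_1$. Without this observation, your identification of the bifurcation function of Theorem \ref{maintheorem} with \eqref{bifur1} is unjustified.

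Two smaller points. First, Theorem \ref{maintheorem} defines $\Delta_\al$ as the lower-right corner of $Id-Y^{-1}(T,z_\al)$; here this equals $1-e^{2\pi}$, whereas your value $1-e^{-2\pi}$ is the corner of $Id-Y(2\pi,z_\al)$. Both are nonzero, so condition (i) still holds, but the distinction is not cosmetic: $\Delta_\al^{-1}$ enters $\gamma_1$ and hence all higher-order bifurcation functions (compare $\gamma_1(r)=-\pi^{\perp}g_1(r,z_r)/(1-e^{2\pi})$, which feeds into $f_2$ in Theorem \ref{Texf2}). Second, the actual computation showing that $\pi g_1(r,z_r)$ equals \eqref{bifur1} --- obtaining $w_1^1$ and $w_1^2$ from Lemma \ref{lemmaexplicit}, forming $g_1=Y^{-1}(2\pi,\cdot)\,w_1^2(2\pi,\cdot)$, restricting to $\mathcal Z$ where $z(\phi)=\tilde f(\phi,r)$, and checking that the $a_0$ contribution $a_0\int_0^{\pi}\cos\phi\,d\phi$ vanishes --- is asserted rather than performed in your proposal; that computation is the bulk of the paper's proof and must be supplied.
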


In order to apply Theorem \ref{maintheorem} for proving Theorem \ref{Texf1}, we need to write system
\eqref{X1X2} in its standard form. Considering
cylindrical coordinates $x=r \cos \T$, $y=r \sin \T$, $z=z,$ the set
of discontinuity becomes $\Sigma= \{\T=0\} \cup \{\T=t_1\}$ with
$t_0=0, t_1=\pi$ and $t_2=2\pi$. The differential system $(\dot
x,\dot y, \dot z)=X_{\e}^+(x,y,z)$ in cylindrical coordinates writes
\begin{equation*}\label{polar1}
\begin{split}
& r'(t)=\e (a_0+a_1 z) \cos \T+\e^2( a_2+a_3  z) \cos \T,\\
& z'(t)=g(r \cos \T,r \sin \T)-z,\\
& \T'(t)=1-\e \dfrac{(a_0+a_1 z)   \sin \T}{r}-\e^2\dfrac{( a_2+a_3
z)  \sin \T}{r},
\end{split}
\end{equation*}
and the differential system $(\dot x,\dot y, \dot z)=X_{\e}^-(x,y,z)$ becomes
\begin{equation}\label{polar2}
\begin{split}
& r'(t)= \e b_1 z \sin \T+\e^2 (b_2+b_3 z) \sin \T,\\
& z'(t)=g(r \cos \T,r \sin \T)-z,\\
& \T'(t)=1+\e\frac{b_1  z \cos \T}{r}+\e^2\frac{( a_2+a_3  z) \cos
\T}{r}.
\end{split}
\end{equation}

Notice that, for each $j=1,2$ and $t_{j-1}\leq\T\leq t_j$, we have
$\dot \T(t)\neq0$ for $|\e|\neq0$ sufficiently small. Thus, in a
sufficiently small neighborhood of the origin we can take $\T$ as the
new independent time variable. Accordingly, system \eqref{polar2} becomes
\begin{equation*}\label{eqcil}
\begin{array}{cl}
&\dot r(\T)=\dfrac{r'(t)}{\T'(t)}=F_{01}(\T,r,z)+\e F_{11}(\T,r,z)+\e^2 F_{2}(\T,r,z)+\CO_1(\e^3),\\
&\dot z(\T)=\dfrac{z'(t)}{\T'(t)}=F_{02}(\T,r,z)+\e F_{12}(\T,r,z))+\e^2
F_{22}(\T,r,z)+\CO_2(\e^3).
\end{array}
\end{equation*}
Considering the notation of Theorem \ref{maintheorem}  we have
$F_i(\T,r,z)=\left(F_{i1}(\T,r,z),F_{i2}(\T,r,z)\right)$ for each $i
\in \{1,2\}$. Moreover, for each $i \in \{1,2\}$ the function
$F_i(\T,r,z)$ is written in the form
$F_i(\T,r,z)=\sum_{j=1}^{2}\chi_{[t_{j-1},t_j]}(\T) F_i^j(\T,r,z)$.

Defining $\tilde{f}(\T,r)=f(r \cos \T, r \sin \T)$ and
$\tilde{g}(\T,r)=g(r \cos \T, r \sin \T)$ we write explicitly the
expressions of $F_0, F_1^j$ and $F_2^j$ for $j \in \{1,2\}$,
\begin{equation*}\begin{array}{rl}
F_0(\T,r,z)=&(0,\,\, \tilde{g}(\T,r)-z),\\
F_1^1(\T,r,z)=&\left((a_0+a_1 z) \cos \T, \, \dfrac{(a_0+a_1 z)\sin
\T}{r}( \tilde{g}(\T,r)-z)\right),\\
F_1^2(\T,r,z)=&\left(b_1 z \sin \T, \, -\dfrac{b_1 z\cos \T}{r}(
\tilde{g}(\T,r)-z)\right),\\
F_2^1(\T,r,z)=&\Big((a_2 +a_3z) \cos \T+\dfrac{(a_0+a_1z)^2 \sin \T
\cos \T}{r}, \, \dfrac{ \sin \T}{r^2}\left((a_0+a_1z)^2  \sin
\T \right.\\
&\left. +(a_2+a_3z) r \right) (\tilde{g}(\T,r)-z)\Big) ,\\
F_2^2(\T,r,z)=&\left((b_2+b_3 z) \sin \T-\dfrac{b_1^2 z^2 \sin \T
\cos \T}{r}, \, \dfrac{ \cos \T}{r^2} \left(b_1^2 z \cos
\T-(b_2+b_3z) r\right) (\tilde{g}(\T,r)-z)\right).
\end{array}
\end{equation*}

The unperturbed system is smooth and its solution
$(r(\T,r_0,z_0),z(\T,r_0,z_0))$ with initial condition $(r_0,z_0)$ is provided by
\begin{equation}\label{solutionunperturbed}
r(\T)=\ov r(\T,r_0,z_0)=r_0,\quad z(\T)=\ov z(\T,r_0,z_0)= e^{-\T}
\left(z_0+\int_0^\T e^s \tilde{g}(s,r_0) ds \right).
\end{equation}
Consequently, a fundamental matrix solution of  \eqref{totalvarsys} is provided by 
\[
Y(\T,r_0,z_0)=\dfrac{\p(\ov r,\ov z)}{\p(r_0,z_0)}(\T,r_0,z_0)=\left(
\begin{array}{cc}
1 & 0 \\
G(\T,r_0) & e^{-\T} \\
\end{array}
\right),\]
where $G(\T,r_0)$ is the derivative of
$\ov z(\T,r_0,z_0)$ with respect to the variable $r_0$. Notice that, from \eqref{solutionunperturbed}, $G(\T,r_0)$ does not depend on $z_0$.

Let $\e_0>0$ be a real positive number and consider the set
$\mathcal{Z} \subset \R^2$ such that $\mathcal{Z}
=\{(r,\tilde{f}(0,r)) : r>\e_0 \}$. Notice that for $(r_0,z_0)=(r_0,\tilde f(0,r_0)) \in
\mathcal{Z}$ we have $z(\T,r_0,z_0)=\tilde{f}(\T,r_0)=f(r_0 \cos \T, r_0
\sin \T)$. Indeed, let $w(\T)=f(r_0 \cos \T, r_0 \sin \T)$. Thus
\begin{equation*}
\begin{array}{cl}
w'(\T)&=\p_x f(r_0 \cos \T, r_0 \sin \T)(-r_0\sin \T)+\p_y f(r_0 \cos \T,
r_0 \sin \T)(r_0\cos \T)\\
&=g(r_0 \cos \T, r_0 \sin \T)-f(r_0 \cos \T, r_0 \sin \T)\\
&=g(r_0 \cos \T, r_0 \sin \T)-w(\T)\\
&=\tilde{g}(\T,r_0)-w(\T).
\end{array}
\end{equation*}
The second equality holds because $g(x,y)=f(x,y)+x
\p_yf(x,y)-y\p_xf(x,y)$.  
Hence, for $(r_0,z_0) \in
\mathcal{Z}$ the solution $z(\T,r_0,z_0)$ is
$2\pi$-periodic. Moreover,
$$Id-Y^{-1}(2\pi,r,z)=\left(
\begin{array}{cc}
0 & 0 \\
\star & 1-e^{2\pi} \\
\end{array}
\right).$$
Consequently, $\Delta_{\al}=1-e^{2\pi} \neq 0$. Accordingly,
all the hypotheses of Theorem \ref{maintheorem} are satisfied.

\begin{proof}[Proof of Theorem \ref{Texf1}]
Denote by $(r,z_r)$ a point in $\mathcal{Z}$, that is $z_r=\tilde f(0,r)$. Notice that the
bifurcation function of first order is $f_1(r)=\pi g_1(r,z_r)$, where
$g_1$ is defined in \eqref{averagefunctions}. Indeed, from
definition $f_1(r)= \pi g_1(r,z_r) + \dfrac{\p \pi g_0}{\p b}
(r,z_r) \gamma_1(r)$. However,\[
g_0(r,z)=Y^{-1}(2\pi,r,z)((r,z(2\pi,r,z)))-(r,z(0,r,z)))=(0,\star),\]and
then $\pi g_0 \equiv 0$. Moreover,

\begin{equation*}\label{w11geral}
\begin{array}{l}
w_1^1(\T,r,z)=\Bigg(a_0 \sin \T+a_1\displaystyle\int_0^{\T} z(\phi) \cos
\phi d \phi, \,\,  G(\T,r)\left(a_0 \sin \T+a_1
\displaystyle\int_0^{\T} z(\phi) \cos \phi d \phi\right)- \\
\quad e^{-\T} \displaystyle\int_0^{\T} \left(e^{\phi} G(\phi,r)
(a_0+a_1 z(\phi))\cos \phi  + \sin \phi\dfrac{e^{\phi}(\tilde{g}
(\phi,r)-z(\phi))(a_0+a_1z(\phi))}{r}\right) d \phi\Bigg),
\end{array}
\end{equation*}
\begin{equation*}\label{w12geral}
\begin{split}
w_1^2(\T,r,z)=&Y(\T,r,z)
\Bigg[Y^{-1}(\pi,r,z)w_1^1(\pi,r,z)+\displaystyle\int_{\pi}^{\T}Y^{-1}(\phi,r,z)
F_1^2(\phi,r(\phi),z(\phi)) d \phi\Bigg]\\
=&Y(\T,r,z)\Bigg(a_1\displaystyle\int_0^{\pi} z(\phi) \cos \phi d
\phi+b_1\displaystyle\int_{\pi}^{\T} z(\phi) \sin \phi d \phi,\\
&\displaystyle\int_0^{\pi } \frac{e^{\phi } ((a_0+a_1 z(\phi )) (\sin
\phi (g(r \cos \phi ,r \sin \phi )-z(\phi ))-r \cos \phi \, G(\phi
,r))}{r} \, d\phi\\
&+\displaystyle\int_{\pi }^{\theta } -\frac{b_1 e^{\phi } z(\phi )
(\cos \phi  (g(r \cos \phi ,r \sin \phi )-z(\phi ))+r \sin \phi
\,G(\phi ,r))}{r} \, d\phi\Bigg).
\end{split}
\end{equation*}
Since $g_1(r,z)=Y^{-1}(2\pi,r,z)w_1^2(2\pi,r,z)$ and $f_1(r)=\pi
g_1(r,z_r)$ it follows that
\begin{equation}\label{bifu1}
f_1(r)=a_1\displaystyle\int_0^{\pi} f(r \cos \phi, r \sin \phi) \cos
\phi d\phi+ b_1 \displaystyle\int_{\pi}^{2\pi} f(r \cos \phi, r \sin
\phi) \sin \phi d\phi.
\end{equation}
So, based on Theorem \ref{maintheorem}, each positive  simple
zero of \eqref{bifur1} provides an isolated periodic solution
of system \eqref{X1X2}. This concludes this proof.
\end{proof}

The next result is an application of Theorem \ref{Texf1}. We shall use in its statement the concept of Bessel functions, which are defined as the canonical solutions $y(x)$ of Bessel's differential equation 
\[
x^2 \dfrac{d^2y}{dx^2}+x
\dfrac{dy}{dx}+(x^2-\alpha^2)y=0,\quad \alpha \in \mathbb{C}.
\]
This equation has two linearly independent solutions. Using
Frobenius' method we obtain one of these solutions, which is called a
\textit{Bessel function of the first kind}, and is denoted by
$J_{\alpha}(x)$. More details about this function can be found in
\cite{bessel}.

\begin{corollary}
Consider the piecewise vector field \eqref{X1X2}. 
\begin{itemize}
\item[(a)] If $f(x,y)=\cos x$, then the piecewise smooth vector field
$X$ admits a sequence of limit cycles $\f_i(t,\e)$ of $X$ of period
$T_{\e}$ such that $T_{\e}=2\pi+\CO(\e)$,
$\f_n(t,\e)=(x_n^*,y_n^*,\cos(x_n^*))+\CO(\e)$, and $|(x_n^*,z_n^*)|=n\pi/2$.

\item[(b)] If $f(x,y)=\sin x$, then the piecewise smooth vector field
$X$ admits a sequence of limit cycles $\f_i(t,\e)$ of $X$ of period
$T_{\e}$ such that $T_{\e}=2\pi+\CO(\e)$, 
$\f_i(t,\e)=(x_n^*,y_n^*,\sin(x_n^*))+\CO(\e)$, and $|(x_n^*,z_n^*)|=r_n^*$,
where each $r_n$ is a zero of the Bessel Function of First Kind,
$J_1(r)$.
\end{itemize}
\end{corollary}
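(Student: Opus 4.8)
The plan is to obtain both items as direct applications of Theorem \ref{Texf1}: for each choice of $f$ I will evaluate the first order bifurcation function \eqref{bifur1}, exhibit its positive simple zeros, and read off the corresponding crossing limit cycles. Since the base points of the bifurcating solutions lie on $\mathcal Z=\{(r,\tilde f(0,r)):r>\e_0\}$ with $\tilde f(0,r)=f(r,0)$, each simple positive zero $r^*$ of $f_1$ produces, by Theorem \ref{Texf1}, a crossing limit cycle of period $2\pi+\CO(\e)$ whose base point is $\CO(\e)$-close to $(x^*,y^*,f(x^*,y^*))$ with $|(x^*,z^*)|=r^*$. Everything therefore hinges on computing the two integrals in \eqref{bifur1} and on verifying simplicity of the zeros.

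For item (a) I substitute $f(r\cos\phi,r\sin\phi)=\cos(r\cos\phi)$ into \eqref{bifur1}. The $a_1$-integral vanishes identically: the integrand $\cos(r\cos\phi)\cos\phi$ is antisymmetric under $\phi\mapsto\pi-\phi$ on $[0,\pi]$ (equivalently, in the Jacobi--Anger expansion $\cos(r\cos\phi)=J_0(r)+2\sum_{k\ge1}(-1)^kJ_{2k}(r)\cos(2k\phi)$ every term is orthogonal to $\cos\phi$ on $[0,\pi]$). For the $b_1$-integral the substitution $u=\cos\phi$ gives $\int_\pi^{2\pi}\cos(r\cos\phi)\sin\phi\,d\phi=-\int_{-1}^{1}\cos(ru)\,du=-2\sin(r)/r$, so that $f_1(r)=-2b_1\sin(r)/r$. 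Its positive zeros coincide with the simple zeros of $\sin r$, and as $r>\e_0>0$ the factor $1/r$ does not affect simplicity, whence $f_1'(r^*)\ne0$. Theorem \ref{Texf1} then yields the claimed sequence of limit cycles, each with third coordinate $f(x^*,y^*)=\cos(x^*)$ at its base point.

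For item (b) the roles of the two integrals are interchanged. With $f(r\cos\phi,r\sin\phi)=\sin(r\cos\phi)$, the $b_1$-integral $\int_\pi^{2\pi}\sin(r\cos\phi)\sin\phi\,d\phi$ vanishes after the same substitution $u=\cos\phi$, since $\sin(ru)$ is odd in $u$. The surviving $a_1$-integral is evaluated through the odd Jacobi--Anger expansion $\sin(r\cos\phi)=2\sum_{k\ge0}(-1)^kJ_{2k+1}(r)\cos((2k+1)\phi)$: orthogonality on $[0,\pi]$ leaves only the $k=0$ term, giving $\int_0^\pi\sin(r\cos\phi)\cos\phi\,d\phi=\pi J_1(r)$. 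Hence $f_1(r)=a_1\pi J_1(r)$, and its positive zeros are exactly the positive zeros of the Bessel function $J_1$.

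The step I expect to require the most care is the simplicity of the zeros, which is what lets me apply Theorem \ref{Texf1}. In item (a) this is clear because the zeros of $\sin$ are simple; in item (b) I would invoke the classical fact that the positive zeros of $J_1$ are simple---a common zero of $J_1$ and $J_1'$ at some $r_n>0$ would, by uniqueness for the Bessel equation $x^2y''+xy'+(x^2-1)y=0$, force $J_1\equiv0$---so that $f_1'(r_n)=a_1\pi J_1'(r_n)\ne0$. With simplicity established (and under the implicit nondegeneracy $b_1\ne0$, resp. $a_1\ne0$), the conclusions follow from Theorem \ref{Texf1} applied at each $r_n^*$, the description of the limiting point and the relation $|(x_n^*,z_n^*)|=r_n^*$ being inherited directly from the parametrization of $\mathcal Z$.
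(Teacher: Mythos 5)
Your proposal is correct and follows essentially the same route as the paper: it evaluates the first-order bifurcation function \eqref{bifur1} for each choice of $f$, obtaining $f_1(r)=-2b_1\sin(r)/r$ in case (a) and $f_1(r)=a_1\pi J_1(r)$ in case (b), and then applies Theorem \ref{Texf1} at the simple positive zeros. The paper's proof states these two formulas without computation, so your explicit evaluation of the integrals, your verification that the zeros of $J_1$ are simple, and your remark on the implicit nondegeneracy conditions $b_1\neq0$, $a_1\neq0$ merely supply details the paper leaves to the reader.
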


\begin{proof}
For $f(x,y)=\cos x$, the bifurcation function \eqref{bifu1} reads
$f_1(r)=-(2b_1\sin r)/r$, and for $f(x,y)=\cos(x)$, the bifurcation
function \eqref{bifu1} reads $f_1(r)=a_1\pi J_1(r)$. Therefore, the
result follows directly from Theorem \ref{Texf1}.
\end{proof}

Notice that Theorem \ref{Texf1} cannot be applied when $f_1$ is
identically zero, which is the case when $f(x,y)=2x^2-y^2$, for instance.  For such cases we define the function

\begin{equation}\label{bifu2}
\begin{split}
f_2(r)&=\displaystyle \int_0^{\pi} \Bigg( a_1 \cos s \bigg(G(s,r)
\displaystyle \int_0^{s}\cos \phi(a_0+a_1 \tilde{f}(\phi, r))d \phi\\
&-e^{-s} \displaystyle \int_0^{s} e^{\phi} (a_0+a_1 \tilde{f}(\phi,
r))(r \cos \phi\,G(\phi,r)+(\tilde{f}(\phi, r)-\tilde{g}(\phi, r))
d\phi\\
&+a_2+a_3 \tilde{f}(\phi, r)+\frac{\sin s}{r} (a_0+a_1 \tilde{f}(s,
r))^2 \bigg)\Bigg) ds\\
&+\frac{e^{-2\pi}(1+e^{\pi})}{2(1-e^{2\pi})}(a_1e^{\pi}-b_1)
\bigg[\displaystyle \int_{0}^{\pi} e^{\phi}G(\phi,r) \cos
\phi(a_0+a_1 \tilde{f}(\phi, r))d\phi\\
&+\displaystyle \int_{0}^{\pi}\frac{e^{\phi}\sin \phi}{r}(a_0+a_1
\tilde{f}(\phi, r))(\tilde{g}(\phi, r)-\tilde{f}(\phi, r)) d\phi\\
&+b_1   \displaystyle\int_{\pi}^{2\pi} e^{\phi} G(\phi,r) \sin
\phi\tilde{f}(\phi, r) d \phi+\frac{b_1}{r} \displaystyle
\int_{\pi}^{2\pi} e^{\phi} \cos \phi(\tilde{g}(\phi,
r)-\tilde{f}(\phi, r)) d \phi\bigg]\\
&+\displaystyle \int_{\pi}^{2\pi}\Bigg( \frac{2}{r}(-b_1^2 \cos s
(\tilde{f}(s, r))^2+ \sin s (b_2+b_3\tilde{f}(s, r)) )\\
&+2b_1 \sin s\bigg( G(s,r) \displaystyle \int_{0}^{\pi}  \cos \phi
(a_0+a_1 \tilde{f}(\phi, r))  +b_1G(s,r) \displaystyle
\int_{\pi}^{s} \sin \phi \tilde{f}(\phi, r) d\phi\\
&+e^{-s} \bigg(\displaystyle \int_{0}^{\pi} -e^{\phi}\cos \phi\,
G(\phi,r)(a_0+a_1 \tilde{f}(\phi, r))+\frac{e^{\phi} \sin \phi}{r}
(\tilde{g}(\phi, r)-\tilde{f}(\phi, r))d \phi\\ &+b_1 \displaystyle
\int_{\pi}^{s} e^{\phi} \left(\frac{\cos {\phi}}{r} (\tilde{f}(\phi,
r)-\tilde{g}(\phi, r))-G(\phi,r) \sin \phi \right)d\phi\bigg)
\bigg)\Bigg)ds.
\end{split}
\end{equation}

\begin{theorem}\label{Texf2} Consider the piecewise vector field \eqref{X1X2}.  Assume
that $f_1\equiv 0$. Then, for each $r*>0$, such that $f_2(r^*)=0$ and
$f_2'(r^*)\neq0$, there exists a crossing limit cycle $\f(t,\e)$ of
$X$ of period $T_{\e}$ such that $T_{\e}=2\pi+\CO(\e)$,
$\f(t,\e)=(x^*,y^*,f(x^*,y^*))+\CO(\e)$, and $|(x^*,z^*)|=r^*$.
\end{theorem}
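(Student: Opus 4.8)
The plan is to apply Theorem \ref{maintheorem} with $k=2$ to the standard form derived above for system \eqref{X1X2}. Hypothesis (i) has already been verified in the discussion preceding the proof of Theorem \ref{Texf1}, where it was shown that $\Delta_r=1-e^{2\pi}\neq 0$, and the hypothesis $f_1\equiv 0$ is exactly the first part of (ii) for $k=2$. Thus the whole matter reduces to computing the second-order bifurcation function $f_2(r)$, checking that it coincides with the expression \eqref{bifu2}, and observing that it is not identically zero. Using the Bell-polynomial form of the bifurcation function \eqref{bifurcationfunction} derived in Section \ref{theaveragefunctions}, and recalling from the proof of Theorem \ref{Texf1} that $\pi g_0\equiv 0$ on $\mathcal Z$ (so that every term carrying a factor of $\pi g_0$ or its $b$-derivatives drops out), the formula collapses to
\begin{equation*}
f_2(r)=\pi g_2(z_r)+\p_b\,\pi g_1(z_r)\,\gamma_1(r),\qquad z_r=(r,\tilde f(0,r)),
\end{equation*}
where $\gamma_1(r)=-\Delta_r^{-1}\pi^{\perp}g_1(z_r)$.

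The next step is to make the three ingredients $g_1$, $g_2$ and $\gamma_1$ explicit. The function $g_1$ and the first-order data $w_1^1,w_1^2$ are already available from the proof of Theorem \ref{Texf1}, so $\gamma_1(r)=-(1-e^{2\pi})^{-1}\pi^{\perp}g_1(z_r)$ follows at once. To obtain $g_2$ I would invoke Proposition \ref{equality}, which gives $g_2(z)=Y_2^{-1}(2\pi,z)\,w_2^2(2\pi,z)/2$, and then compute $w_2^2$ from Lemma \ref{lemmaexplicit} with $i=2$ and $n=2$. Concretely, this requires the second-order building blocks on each sector, namely the terms involving $F_2^j$, $\p^2 F_0^j\,B_{2,2}(w_1^j)$ and $\p F_1^j\,B_{1,1}(w_1^j)$ for $j=1,2$, integrated against $Y_j^{-1}$ over $[0,\pi]$ and $[\pi,2\pi]$, together with the gluing term $Y_2^{-1}(\pi,z)\,w_2^1(\pi,z)$ that carries the first-sector contribution across the switching line $\T=\pi$. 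Substituting the explicit $F_i^j$, the fundamental matrix $Y$, the function $G$ and the periodic solution \eqref{solutionunperturbed} into these integrals and projecting with $\pi$ yields $\pi g_2(z_r)$; adding the correction $\p_b\,\pi g_1(z_r)\,\gamma_1(r)$ then produces, after simplification, precisely \eqref{bifu2}.

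The main obstacle is entirely computational rather than conceptual: it is the bookkeeping of the second-order term $w_2^2$. One must track the symmetric bilinear pieces $\p^2 F_0^j\,B_{2,2}(w_1^j)$ and the mixed pieces $\p F_1^j\,B_{1,1}(w_1^j)$ on two sectors governed by distinct fields $F_i^1$ and $F_i^2$, handle the matching at $\T=\pi$, and correctly assemble the $\gamma_1$-contribution $\p_b\,\pi g_1(z_r)\,\gamma_1(r)$, which itself requires differentiating the $z$-component of $g_1$ in $r$ and accounts for the factor $(a_1e^{\pi}-b_1)$ appearing in \eqref{bifu2}. These nested integrals, involving $G(\T,r)$ and $\tilde g(\T,r)-\tilde f(\T,r)$, explain the length of \eqref{bifu2}; Bell polynomials and a computer algebra system streamline the algebra, but no new idea is needed.

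Finally, with $f_2$ identified with \eqref{bifu2} and assumed not identically zero, all hypotheses of Theorem \ref{maintheorem} hold for $k=2$: given $r^*>0$ with $f_2(r^*)=0$ and $f_2'(r^*)\neq 0$, one has $\det(Df_2(r^*))\neq 0$ since $\dim\mathcal Z=1$. Theorem \ref{maintheorem} then yields a $2\pi$-periodic solution of the standard form with initial condition $z_{r^*}+\CO(\e)$. Undoing the cylindrical change of variables and the rescaling back to the original coordinates of \eqref{X1X2} converts this into a crossing limit cycle $\f(t,\e)$ of period $T_\e=2\pi+\CO(\e)$ with $\f(t,\e)=(x^*,y^*,f(x^*,y^*))+\CO(\e)$ and $|(x^*,z^*)|=r^*$, as claimed.
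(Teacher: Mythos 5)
Your proposal is correct and follows essentially the same route as the paper: using $\pi g_0\equiv 0$ to collapse \eqref{bifurcationfunction} to $f_2(r)=\pi g_2(z_r)+\p_b\,\pi g_1(z_r)\,\gamma_1(r)$ with $\gamma_1(r)=-(1-e^{2\pi})^{-1}\pi^{\perp}g_1(z_r)$, computing $g_2$ via Proposition \ref{equality} and Lemma \ref{lemmaexplicit}, identifying the result with \eqref{bifu2}, and concluding through Theorem \ref{maintheorem}. Your added remarks (that $f_2'(r^*)\neq 0$ gives $\det(Df_2(r^*))\neq 0$ because $d=1$, and that one must undo the cylindrical change of variables) only make explicit what the paper leaves implicit.
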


\begin{proof}
As we saw before $\pi g_0 \equiv 0$. So, from \eqref{bifurcationfunction}, we compute the bifurcation function of order $2$ as
\begin{equation}\label{ff2}
f_2(r)=\dfrac{\p \pi g_1}{\p b}(r,z_r) \gamma_1(r)+\pi g_2(r,z_r),
\end{equation}
where $\gamma_1(r)=-\dfrac{1}{1-e^{2 \pi }} \pi^{\perp}g_1(r,z_r)$
and
\begin{equation*}\label{projectionofg1}
\begin{split}
\pi^{\perp}g_1(r,z_r)&= \displaystyle\int_0^{\pi } \frac{e^{\phi }
((a_0+a_1 \tilde{f}(\phi, r)) (\sin \phi (g(r \cos \phi ,r \sin \phi
)-\tilde{f}(\phi, r))-r \cos \phi  G(\phi ,r,z))}{r} \, d\phi\\
&-b_1\displaystyle\int_{\pi }^{2\pi} \frac{ e^{\phi } \tilde{f}(\phi,
r) (\cos \phi  (g(r \cos \phi ,r \sin \phi )-\tilde{f}(\phi, r))+r
\sin \phi  G(\phi ,r,z))}{r} \, d\phi.
\end{split}
\end{equation*}
From Proposition \ref{equality}, we have $g_2(r,z_r)=Y^{-1}(2\pi,r,z)w_2^2(2\pi,r,z)/2$, where 
$w_i^j(2\pi,r,z)$ is provided in Lemma \ref{lemmaexplicit}. All these functions may be computed to get \eqref{ff2} as  \eqref{bifu2}. Again, in accordance with Theorem \ref{maintheorem}, each positive  simple
zero of \eqref{bifu2} provides an isolated periodic solution
of system \eqref{X1X2}. This concludes this proof.
\end{proof}

The next result is an application of Theorem \ref{Texf2}.

\begin{corollary}\label{corollary9}
Consider the piecewise vector field \eqref{X1X2} and let
$f(x,y)=2 x^2-y^2$. Assuming $a_1^2+b_1^2 \neq 0$ define
\begin{equation}\label{A0A1}
\begin{array}{l}
A_0=\dfrac{-80 b_2 (1-e^{\pi})}{(1+e^{\pi})4(15 a_1
b_1-b_1^2-14a_1^2)-5\pi(1-e^{\pi})(b_1^1+10a_1^2)},\vspace{0.2cm}\\
A_1=\dfrac{40
a_0((1+e^{\pi})(b_1-a1)-a1 \pi(1-e^{\pi})}{(1+e^{\pi})4(15 a_1
b_1-b_1^2-14a_1^2)-5\pi(1-e^{\pi})(b_1^1+10a_1^2)},
\end{array}
\end{equation}
 and $D=-4A_1^3-27A_0^2$. 
\begin{itemize}
\item[(i)] If $D>0$,  then the piecewise smooth vector field admits
at least one limit cycle. Moreover, if $A_1<0$ and $A_0>0$, then the
piecewise smooth vector field admits at least two limit cycles;

\item[(ii)] If $D \leq 0$ and $A_0<0,$ then the piecewise smooth vector
field admits at least one limit cycle.
\end{itemize}
Moreover, in both cases we have a limit cycle $\f(t,\e)$ of $X$ of
period $T_{\e}$ such that $T_{\e}=2\pi+\CO(\e)$,
$\f(t,\e)=(x_n^*,y_n^*,2 (x_n^*)^2-(y_n^*)^2)+\CO(\e)$, and $|(x_n^*,z_n^*)|=r_n^*$.
\end{corollary}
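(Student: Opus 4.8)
The plan is to specialize Theorem \ref{Texf2} to $f(x,y)=2x^2-y^2$, to reduce the second-order bifurcation function \eqref{bifu2} to a cubic polynomial in $r$, and then to count its positive simple roots by a discriminant argument. First I would assemble the data entering \eqref{bifu2}. From $g(x,y)=f(x,y)+x\partial_y f(x,y)-y\partial_x f(x,y)$ one gets $g(x,y)=2x^2-y^2-6xy$, hence $\tilde f(\phi,r)=r^2(2\cos^2\phi-\sin^2\phi)$ and $\tilde g(\phi,r)=r^2(2\cos^2\phi-\sin^2\phi-6\cos\phi\sin\phi)$, while differentiating the unperturbed solution $\ov z$ in \eqref{solutionunperturbed} with respect to $r$ shows that $G(\phi,r)$ is linear in $r$. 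A direct evaluation of \eqref{bifur1} gives $f_1\equiv 0$ for this $f$, so the hypothesis of Theorem \ref{Texf2} holds and each positive simple zero $r^\ast$ of $f_2$ produces a crossing limit cycle $\f(t,\e)$ with $\f(t,\e)=(x^\ast,y^\ast,2(x^\ast)^2-(y^\ast)^2)+\CO(\e)$.

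Next I would substitute $\tilde f$, $\tilde g$ and $G$ into \eqref{ff2}--\eqref{bifu2} and evaluate all the resulting trigonometric--exponential integrals over $[0,\pi]$ and $[\pi,2\pi]$. The expected outcome is that $f_2$ collapses to a genuine cubic in $r$, and that after dividing by its leading coefficient --- which coincides with the common denominator in \eqref{A0A1} and is nonzero precisely because $a_1^2+b_1^2\neq 0$ --- one obtains $f_2(r)=\kappa\,(r^3+A_1 r+A_0)$ with $\kappa\neq 0$ and $A_0,A_1$ as in \eqref{A0A1}; in particular the contributions of $a_2$, $a_3$ and $b_3$ must cancel out. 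I expect this step to be the main obstacle: it is a long but conceptually routine computation, best handled with a computer algebra system.

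Finally I would read off the roots of the depressed cubic $P(r)=r^3+A_1 r+A_0$, whose discriminant is exactly $D=-4A_1^3-27A_0^2$, noting that $P$ has vanishing $r^2$-coefficient, so the sum of its roots is $0$ and their product is $-A_0$. If $D>0$, then $P$ has three distinct real roots and necessarily $A_1<0$; being distinct with zero sum they cannot all share a sign, so at least one is positive, which gives statement (i); if moreover $A_0>0$, the product $-A_0<0$ forces exactly one negative root and hence two positive simple roots, yielding a second limit cycle. If $D\le 0$ and $A_0<0$, then for $D<0$ there is a single real root, which lies in $(0,\infty)$ because $P(0)=A_0<0$ and $P(r)\to+\infty$, and for $D=0$ the roots are $\{r_1,r_1,r_2\}$ with $2r_1+r_2=0$ and $r_1^2 r_2=-A_0>0$, so the simple root $r_2$ is positive; this gives statement (ii). In each case $P$ has the asserted number of positive simple zeros, and feeding them into Theorem \ref{Texf2} yields the corresponding limit cycles with the stated period and location.
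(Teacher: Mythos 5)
Your proposal follows essentially the same route as the paper: specialize to $f(x,y)=2x^2-y^2$ (where $f_1\equiv 0$), compute the second-order bifurcation function \eqref{bifu2} explicitly (the paper, like you, simply records the outcome of this long computation as the cubic \eqref{bifu2example}), normalize by the leading coefficient to get the depressed cubic $r^3+A_1r+A_0$, and count positive simple roots via the discriminant $D=-4A_1^3-27A_0^2$ before invoking the bifurcation theorem. The only differences are cosmetic: the paper uses Descartes' rule of signs for the two-positive-roots case where you use Vieta's relations, and you are in fact slightly more careful than the paper in checking simplicity of the positive root when $D=0$.
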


\begin{proof}
For $f(x,y)=2 x^2-y^2$ the bifurcation function \eqref{bifu2} becomes
\begin{equation}
\label{bifu2example}
\begin{split}
f_2(r)&=-2 b_2+\frac{a_0 \left(\left(e^{\pi } (1-\pi)+1+\pi\right)
a_1-\left(1+e^{\pi }\right) b_1\right)}{e^{\pi }-1}r\\
&+\frac{\left(-\left(e^{\pi } (56-50 \pi)+56+50 \pi \right) a_1^2+60
\left(1+e^{\pi }\right) a_1 b_1-\left(e^{\pi } (4-5 \pi)+4+5 \pi
\right) b_1^2\right)}{40 \left(e^{\pi }-1\right)}r^3.
\end{split}
\end{equation}
Dividing $f_2$ by $a_1^2+b_1^2 \neq 0$, we see that the equation $f_2(r)=0$ is equivalent to 
$\tilde{f}_2(r)\doteq A_0+A_1 r+r^3=0$, where $A_0$ and $A_1$ are provided in \eqref{A0A1}.

Notice that $\tilde{f}_2(r)$ is a polynomial function of degree $3$, so it has at least one real root and can be written as $\tilde{f}_2(r)=r^3-(r_1+r_2+r_3)
r^2+(r_1r_2+r_1r_3+r_2r_3)r-r_1r_2r_3$, where $r_i, i=1,2,3$ are the
zeros of the polynomial. Moreover, the sign of its discriminant
$D=-4A_1^3-27A_0^2$ carries information about its number of real
roots. 

If $D>0$ the polynomial $\tilde{f}_2(r)$ has three simple real roots
$r_1, r_2$ and $r_3$. Since the polynomial has no quadratic term, it
follows that $r_1+r_2+r_3=0$ and then at least one of these roots
must be positive. Moreover, if $A_1<0$ and $A_0>0$, then there are two
changes of sign between the terms of the polynomial, and then by
\textit{Descartes Sign Theorem} we get the two positive roots.

If $D \leq 0$, then there is a pair of complex roots or a double real
root. In both cases, the condition $A_0<0$ implies that at least one
root is positive.

Now,  according to Theorem \ref{maintheorem}, each positive  simple
zero of \eqref{bifu2example} provides an isolated periodic solution
of system \eqref{X1X2}. This concludes this proof.
\end{proof}

\subsection{non-smooth perturbation of a non-smooth center}

In this example, we consider a non-smooth differential system in
$\R^3$ defined in $4$ zones ($n=4$). Consider the non-smooth vector
field
\begin{equation}
\label{linearcenter} X(u,v,w)=
\begin{cases}
X_1(u,v,w)& \text{if $u>0$ and $v>0$},\\
X_2(u,v,w)& \text{if $u<0$ and $v>0$},\\
X_3(u,v,w)& \text{if $u<0$ and $v<0$},\\
X_4(u,v,w)& \text{if $u>0$ and $v<0$},
\end{cases}
\end{equation}
where
\begin{equation*}
\begin{split}
&X_1(u,v,w)=\left(-1 +  \e(a_{1} x + b_{1}),\, 1, \, -w+  \e(c_1 x + d_1)\right),\\
&X_2(u,v,w)=\left(-1 + \e(a_{2} x + b_{2}), \, -1,  \,-w+ \e(c_2 x + d_2)\right),\\
&X_3(u,v,w)=\left(1 + \e(a_{3} x + b_{3}), \,-1, \, -w+ \e (c_3 x + d_3)\right),\\
&X_4(u,v,w)=\left(1 +  \e (a_{4} x + b_{4}), \,1, \,-w+  \e (c_4 x +
d_4)\right),
\end{split}
\end{equation*}
with $a_{j}, b_{j}, c_{j}, d_{j} \in \R$ for all $j$.

Writing in cylindrical coordinates $u=r \cos \T$, $v=r \sin \T$,
$w=w,$ the discontinuity set is $\Sigma= \{\T=0\} \cup \{\T=t_1\}
\cup \{\T=t_2\} \cup \{\T=t_3\}$ with $t_0=0, t_1=\pi/2, t_2=\pi,
t_3=3\pi/2$ and $t_4=2\pi$. For each $j=1,2,3,4$ the differential
system $(\dot u,\dot v, \dot w)=X_j(u,v,w)$ in cylindrical
coordinates writes
\begin{equation*}
\label{linearcenterpolarj2}
\begin{split}
& r'(t)=g_j(\T) + \sum_{i=1}^k\e^i(a_{ij}r\cos^2\T + b_{ij}\cos \T),\\
& w'(t)=-w+\sum_{i=1}^k\e^i(c_{ij}r\cos\T + d_{ij}\cos \T),\\
& \T'(t)=\frac{1}{r}\left(\widehat{g}_j(\T) -
\sum_{i=1}^k\e^i(a_{ij}r\cos \T \sin \T + b_{ij} \sin \T)\right),
\end{split}
\end{equation*}
where
\begin{equation*}
\begin{array}{RRL}
&g_1(\T)=\sin \T - \cos \T, \quad \quad \quad &\widehat{g}_1(\T)=\sin \T + \cos \T,\\
&g_2(\T)=-(\sin \T + \cos \T), \quad \quad \quad &\widehat{g}_2(\T)=\sin \T - \cos \T,\\
&g_3(\T)=-\sin \T + \cos \T, \quad \quad \quad &\widehat{g}_3(\T)=-(\sin \T + \cos \T),\\
&g_4(\T)=\sin \T + \cos \T, \quad \quad \quad
&\widehat{g}_4(\T)=-\sin \T + \cos \T.
\end{array}
\end{equation*}
Notice that, for each $j=1,2,3,4$ and $t_{j-1}\leq\T\leq t_j$, $\dot \T(t)\neq0$ for $|\e|$ sufficiently small. Thus, in a
sufficiently small neighborhood of the origin we can take $\T$ as the
new independent time variable by doing $r'(\T)=\dot r(t)/\dot \T(t)$
and $w'(\T)=\dot w(t)/\dot \T(t)$. Taking $\T$ as the new independent
time variable, we have
\begin{equation}
\label{eqcilindrica}
\begin{split}
r'(\T)=F_{01}^j(\T,z)+\e F_{11}^j(\T,z)+\CO_1(\e^2),\\
w'(\T)=F_{02}^j(\T,z)+\e F_{12}^j(\T,z)+\CO_2(\e^2).
\end{split}
\end{equation}
Here, $z=(r,w)$ and the prime denotes the derivative with respect to
$\T$. The expressions of $F_{01}^j$ and
$F_{02}^j$ for $j=1,2,3,4$ are provided by
\begin{equation*}
\begin{split}
F_{01}^1=\dfrac{r (\sin \theta -\cos \theta )}{\sin \theta +\cos
\theta }, \,\, F_{02}^1=\dfrac{-r w}{\sin \theta +\cos \theta }, \,\,
F_{01}^2=\frac{r (\sin \T+\cos \T)}{\cos \T-\sin \T}, \,\,
F_{02}^2=\frac{r w}{\cos \T-\sin \T},\\
\vspace{0.3cm}\\
F_{01}^3=\frac{r (\sin \T-\cos \T)}{\sin \T+\cos \T}, \,\,
F_{02}^3=\frac{r w}{\sin \T+\cos \T}, \,\, F_{01}^4=\frac{r (\sin
\T+\cos \T)}{\cos \T-\sin \T}, \,\, F_{02}^4=\frac{-r w}{\cos \T-\sin
\T}.\\ \vspace{0.3cm}
\end{split}
\end{equation*}
The expressions of $F_{11}^j$
and $F_{12}^j$  for $j=1,2,3,4$ are also easily computed. Nevertheless, we shall omit these expressions due to their size.

For each $j \in \{1,2,3,4\},$ the differential system
\eqref{eqcilindrica} is $2\pi$-periodic in the variable $\T$ and 
is written in standard form with
\[
F_i^j(\T,z)=\left(F_{i1}^j(\T,z), F_{i2}^j(\T,z)\right),
\]
for $i=0,1$. Now, for each $j \in \{1,2,3,4\}$ we compute the solution
$x_j(\T,z,0)$ of the unperturbed system
\begin{equation*}
\label{us1} \dot r(\T)=F_{01}^j(\T,z), \quad \dot
w(\T)=F_{02}^j(\T,z).
\end{equation*}and this solution is
\begin{equation*}
\begin{array}{l}
x_1(\T,z,0)=\left(\frac{r}{\sin \T+\cos \T},w e^{-\frac{r \sin
\T}{\sin \T+\cos \T}} \right),\\
x_2(\T,z,0)=\left(\frac{-r}{\cos \T-\sin \T},w e^{-\frac{r \sin
\T}{\cos \T-\sin \T}-2 r} \right),\\
x_3(\T,z,0)=\left(\frac{-r}{\sin \T+\cos \T},w e^{-\frac{r \sin
\T}{\sin \T+\cos \T}-2 r} \right),\\ x_4(\T,z,0)=\left(\frac{r}{\cos
\T-\sin \T},w e^{-\frac{r \sin \T}{\cos \T-\sin \T}-4 r} \right).
\end{array}
\end{equation*}
We note that in each quadrant the denominators of these four
solutions never vanish.

Let $0<r_0< r_1$ be positive real numbers and consider the set
$\mathcal{Z} \subset \R^2$ such that $\mathcal{Z} =\{(\al,0) :
r_0<\al<r_1 \}$. The solution $x(\T,z,0)$ of the unperturbed system
$x'(\T)=F_0(\T,z)$ satisfies $x(\T,z,0)=x_j(\T,z,0)$, for $\T \in
[t_{j-1},t_j]$, and $x(2\pi,z,0)-x(0,z,0)=(0,z(1-e^{-4 r}))$. Consequently, for each $z_{\al} \in \mathcal{Z}$, the solution
$x(\T,z,0)$ is $2 \pi$-periodic and system \eqref{linearcenter}
satisfies hypothesis \text{$(H_{a}$)}. Moreover, the fundamental matrix $Y(\T,z)$
is provided by
\begin{equation*}
\label{relation2} Y(\T,z)=\begin{cases}
Y_1(\T,z) & \text{if $0=t_0\leq \T \leq \pi/2$},\\
Y_2(\T,z) & \text{if $\pi/2\leq \T \leq \pi$},\\
Y_3(\T,z) & \text{if $\pi\leq \T \leq 3\pi/2$},\\
Y_4(\T,z) & \text{if $3\pi/2\leq \T \leq 2\pi$},\\
\end{cases}
\end{equation*}
where $Y_j(t,z)$ are defined by \eqref{defined1}. Thus
\begin{equation*}
\begin{array}{RL}
Y_1(\T,z)=&\left(
\begin{array}{cc}
\frac{1}{g_4(\T)} & 0 \\
-\frac{e^{-\frac{r \sin \T}{g_4(\T)}} w \sin \T}{g_4(\T)} &
e^{-\frac{r \sin \T}{g_4(\T)}} \\
\end{array}
\right),\\
Y_4(\T,z)=&\left(
\begin{array}{cc}
\frac{1}{g_3(\T)} & 0 \\
-\frac{e^{-\frac{r \sin \T}{g_3(\T)}-4 r} w \left(\sin \T+4
g_3(\T)\right)}{g_3(\T)} & e^{-\frac{r \sin \T}{g_3(\T)}-4 r} \\
\end{array}
\right).\\
\end{array}
\end{equation*}
Hence,
\[
Y_1(0,z)^{-1}-Y_4(2 \pi ,z)^{-1}=\left(
\begin{array}{cc}
0 & 0 \\
-4 w & 1-e^{4 r} \\
\end{array}
\right),
\]
and then $\det(\Delta_{\al})=1-e^{4 r} \neq 0$ if $z_{\al}=(\al,0)
\in \mathcal{Z}$. Thus, we can compute the bifurcation functions
\eqref{bifurcationfunction} for system \eqref{linearcenter}. For
such we first obtain the functions \eqref{w11} corresponding to
this system,
\begin{equation*}
\begin{array}{cl}
g_0(\T,z)=&(0,w(1-e^{4 r})),\\
w_1^4(2\pi,z)=&\Big(\dfrac{1}{2} r (r (a_1+a_2+a_3+a_4)+2 (b_1-b_2-b_3+b_4)),\\
&\dfrac{1}{3} e^{-4 r} (-r^2 w (6 a_1+3 a_2+2 a_3)-3 r (w (4 b_1-2 b_2-b_3)\\
&+e^{2 r}(-e^{2 r} c_4+c_2+c_3)+c_1)+3 (e^r-1) (e^r (c_2+d_2)\\
&+e^{2 r} (c_3-d_3)+e^{3 r} (d_4-c_4)+c_1+d_1))\Big),
\end{array}
\end{equation*}and
\begin{equation}
\label{g1} g_1(z)=Y_4(2 \pi ,z)^{-1} w_1^4(2\pi,z).
\end{equation}
This way, the bifurcation function \eqref{bifurcationfunction} corresponding
to the function \eqref{g1} becomes
\begin{equation*}
f_1(\al)=\frac{1}{2} \al (\al (a_1+a_2+a_3+a_4)+2 (b_1-b_2-b_3+b_4)),
\end{equation*}
which has a simple zero $\al^{\ast}$. So, from Theorem
\ref{maintheorem}, we get the existence of an isolated periodic
solution of system \eqref{eqcilindrica} for $\e$ sufficiently small.

\section{Conclusion}

Establishing the existence of invariant sets, in particular equilibria and periodic solutions, is very important for  understanding the dynamics of a differential system. The detection of periodic solutions is far more complicated then equilibria and relies on solving differential equations, which cannot be done in general. For smooth perturbative differential systems, the Melnikov method and the averaging theory are two classical tools that reduce the problem of finding periodic solutions into a simpler problem of finding zeros of a related function. These tools, particularly the averaging theory, have been  recently developed for non-smooth perturbative systems.

This manuscript is devoted to develop the averaging theory for a class of non-smooth differential systems. Results relating isolated periodic solutions with zeros of a sequence of bifurcation functions are shown. In Section \ref{andronov}, we show that our results can be used in a wide family of differential systems with applied interests, namely, non-smooth perturbations of vector fields admitting Andronov-Hopf equilibria.

A natural next step for further investigations consists in considering classes of non-smooth differential systems with more general set of discontinuities. For these cases, it has been noticed in \cite{LliMerNovJDE2015} that the higher order averaged functions does not control in general the bifurcation of periodic solutions. A complete second order analysis was performed in \cite{BasBuzLliNov18}. It was shown that the second order bifurcation (Melnikov) function is provided as the second-order averaged function added to an increment, which depends on the geometry of the discontinuity set. Currently, the theory lacks a higher order analysis for these cases.

\section*{Acknowledgements}

We thank the referees for their comments and suggestions which help us
to improve the presentation of this paper.

\smallskip

The authors thank Espa\c{c}o da Escrita -- Pr\'{o}-Reitoria de Pesquisa -- UNICAMP for the language services provided.

\smallskip

JL is partially supported by the Ministerio de
Econom\'ia, Industria y Competitividad, Agencia Estatal de
Investigaci\'on grant MTM2016-77278-P (FEDER), the Ag\`encia de
Gesti\'o d'Ajuts Universitaris i de Recerca grant 2017 SGR 1617,
the European project Dynamics-H2020-MSCA-RISE-2017-777911. DDN is partially supported by FAPESP grants 2018/16430-8, 2018/ 13481-0, and 2019/10269-3, by CNPq grants 306649/2018-7 and 438975/ 2018-9. JL and DDN are also partially supported by the European Community grants  FP7-PEOPLE-2012-IRSES-316338 and FP7-PEOPLE-2012-IRSES-318999.

\end{document}